\newtheorem{thm}{Theorem}[section]
\newtheorem{cor}[thm]{Corollary}
\newtheorem{prop}[thm]{Proposition}
\newtheorem{lem}[thm]{Lemma}
\theoremstyle{definition}
\newtheorem{rem}[thm]{Remark}
\numberwithin{equation}{section}
\tikzset{every picture/.style={line width=1pt}}
\tikzset{->-/.style 2 args={
    postaction={decorate},
    decoration={markings, mark=at position #1 with {\arrow[thick, #2]{>}}}
    },
    ->-/.default={0.5}{}
}
\newcommand{\pc}[1]{
\draw (#1) --++(0,0.5) --++(1,1) -- ++(0,0.5);
\draw ($(#1)+(1,0)$) --++(0,0.5) --++(-0.3,0.3);
\draw ($(#1)+(0,2)$) --++(0,-0.5) --++(0.3,-0.3);
}
\newcommand{\nc}[1]{
\draw (#1) --++(0,0.5) --++(0.3,0.3);
\draw ($(#1)+(1,2)$) --++(0,-0.5) --++(-0.3,-0.3);
\draw ($(#1)+(1,0)$) --++(0,0.5) --++(-1,1) --+(0,0.5);
}
\newcommand{\npc}{
\begin{tikzpicture}[baseline=0.4pt, x=3mm, y=3mm]
\draw[thick] (0,0) -- (1,1);
\draw[thick] (1,0) -- (0.7, 0.3); 
\draw[thick] (0.35, 0.65) -- (0,1);
\end{tikzpicture}
}
\newcommand{\nnc}{
\begin{tikzpicture}[baseline=0.4pt, x=3mm, y=3mm]
\draw[thick] (1,0) -- (0,1);
\draw[thick] (0,0) -- (0.3, 0.3); 
\draw[thick] (0.65, 0.65) -- (1,1);
\end{tikzpicture}
}
\begin{document}
\title[Crossing matrix and extended first Johnson homomorphism]{The crossing matrix and the extended first Johnson homomorphism of a braid group}
\author[Y. Kuno]{Yusuke Kuno}
\address{Department of Mathematics, Tsuda University, 2-1-1 Tsuda-machi, Kodaira-shi, Tokyo 187-8577, Japan}
\email{kunotti@tsuda.ac.jp}
\author[Y. Yaguchi]{Yoshiro Yaguchi}
\address{Maebashi Institute of Technology, 460-1, Kamisadori-machi, Maebashi-shi, Gunma 371-0816, Japan}
\email{y.yaguchi@maebashi-it.ac.jp}

\date{}

\begin{abstract}
We compare two crossed homomorphisms on a braid group, one defined diagrammatically and the other defined algebraically.
We show that these crossed homomorphisms are essentially the same, and compute them in detail for simple braids, namely elements conjugate to the standard generators of the braid group or to their inverses. 
\end{abstract}

\subjclass[2020]{Primary 20F36; Secondary 20F34}

\keywords{braid group, simple braid, crossing matrix, Johnson homomorphism}
\maketitle

\section{Introduction} \label{sec:intro}

Let $m$ be a positive integer with $m\geq 2$ and $B_m$ the braid group of degree $m$.
In this paper, we study two crossed homomorphisms on $B_m$.

One crossed homomorphism is defined diagrammatically and called the {\it crossing matrix}, which was introduced by Burillo, Gutierrez, Krsti\'{c} and Nitecki~\cite{BGKN02}.
The crossing matrix $C(\beta)$ of an $m$-braid $\beta$ is an $m\times m$ matrix whose $(i,j)$-entry counts the crossings of the $i$-th strand over the $j$-th strand, taking the signs into account.
This yields a crossed homomorphism 
\[
C: B_m \to {\rm Mat}_m^0,
\]
where the target is the abelian group of $m\times m$ integer matrices with zero diagonal entries.

Burillo et al.\ \cite{BGKN02} completely characterized the crossing matrices of braids, thereby determining the image $C(B_m)$. 
From various motivations, the problem of determining $C(S)$ for certain subsets $S$ of $B_m$ has been studied.
For example, when $S$ is the set of positive pure braids, a conjecture completely characterizing $C(S)$ was proposed in \cite{BGKN02}. Several authors, including \cite{GN18, aya-yoshi, yuko-aya-yoshi}, have investigated this conjecture and obtained partial results, but a complete solution has not yet been achieved.

The other crossed homomorphism is defined from a more algebraic viewpoint.
Recall that the group $B_m$ acts naturally on the free group $F_m$ of rank $m$, and we have 
an embedding $B_m \hookrightarrow {\rm Aut}(F_m)$ called the Artin representation.
Let $H= F_m^{\rm abel}$ be the abelianization of $F_m$, which is free abelian of rank $m$.
In a cohomological study of ${\rm Aut}(F_m)$, Kawazumi~\cite{kawazumi} introduced a crossed homomorphism
$
\tau_1^{\theta}: {\rm Aut}(F_m) \to {\rm Hom}(H, H^{\otimes 2})
$
called the {\it extended first Johnson homomorphism} (associated with the standard Magnus expansion). 
By restriction, we obtain a map from $B_m$ which actually takes values in a smaller space:
\[
\tau_1^{\theta}: B_m \to {\rm Hom}(H, \wedge^2 H).
\]
In \cite{kawazumi06}, Kawazumi used this map to study the cohomology of the braid group with twisted coefficients.

Since both maps $C$ and $\tau_1^{\theta}$ are crossed homomorphisms, it is natural to seek their relationship.
Our first main result, which is Theorem~\ref{thm:tau=C2}, shows that they carry essentially the same information: there is an injective map $\delta: {\rm Mat}_m^0 \to {\rm Hom}(H, \wedge^2 H)$ such that $\tau_1^{\theta} = \delta \circ C$. 

Our second main result concerns the computation of these invariants for simple braids, that is, elements conjugate to a standard generator of $B_m$ or to its inverse. 
Each simple braid is determined by its sign (positive/negative) and the isotopy class of its cord, a certain arc in an $m$-marked disk. 
In Theorem~\ref{prop:f_i_for_simple}, we give a formula for the crossing matrix of a simple braid in terms of a homological invariant of the corresponding cord. This implies that we can determine the crossing matrices of simple braids. 

The {\it Hurwitz action} is a natural action of a braid group $B_m$ on the $m$-fold direct product $S^m$ of a {\it quandle} $S$, that is, a set with a binary operation 
which axiomatizes the notion of conjugation in a group (cf. \cite{Dehornoy}). The {\it Hurwitz equivalence problem} asks whether given two elements of $S^m$ are sent to each other by the Hurwitz action, or not. When $S$ is the set of simple braids of degree $n$, solving the problem implies a complete invariant of {\it surface braids of degree} $n$ {\it with} $m$ {\it branch points}, which are, roughly speaking, a $2$-dimensional version of classical braids of degree $n$ (cf. \cite{kamada}). However, no algorithm to solve the problem is known for general quandles, including the set of simple braids. We expect to obtain invariants of surface braids using Theorem~\ref{prop:f_i_for_simple}.

The map $\tau_1^{\theta}$ extends to a family of maps $\{ \tau_k^{\theta}\}_{k\ge 1}$, called the extended higher Johnson homomorphisms \cite{kawazumi}.
It would be interesting to extend the equality $\tau_1^{\theta} = \delta \circ C$ to a diagrammatic description of the higher invariants $\tau_2^{\theta}$, $\tau_3^{\theta},\ldots$ on the braid group.

This paper is organized as follows. 
In Section~\ref{sec:braids}, we introduce terminology for braids and recall the definition of the crossing matrix.
In Section~\ref{sec:Johnson}, we recall the definition of the extended first Johnson homomorphism (to be precise, associated with the standard Magnus expansion).
In Section~\ref{sec:Candtau}, we prove Theorem~\ref{thm:tau=C2}.
In Section~\ref{sec:Simple_braids}, we recall the definition of a cord of a simple braid, introduce its homological invariant, and prove Theorem~\ref{prop:f_i_for_simple}.
 
\subsubsection*{Convention}
Group actions are always from the left. 
All the homology groups that we consider will be over the integers. 
Let $G$ be a group acting on a commutative group $A$. 
Then, the {\it semi-direct product} $A \rtimes G$ is a group structure on the cartesian product $A \times G$ defined by
\[
(a_1, g_1)(a_2, g_2) = (a_1 + g_1\cdot a_2, g_1 g_2).
\]
A map $c: G \to A$ is called a {\it crossed homomorphism} if it satisfies
\[
c(g_1 g_2) = c(g_1) + g_1 \cdot c(g_2)
\]
for any $g_1, g_2 \in G$.

In this paper, we often consider the following situation.
Let $\varpi: G \to K$ be a group homomorphism, and suppose that $K$ acts on a commutative group $A$.
The group $G$ acts on $A$ through $\varpi$. Let $c: G \to A$ be a crossed homomorphism.
Then, we have the following group homomorphism:
\[
\widetilde{c}: G \to A \rtimes K, \quad
\widetilde{c}(g) = (c(g), \varpi(g)).
\]

\section{The crossing matrix of a braid group} \label{sec:braids}
Let $m$ be a fixed integer with $m\geq 2$.
Let $I = [0,1]$ be the unit interval in $\mathbb{R}$ and let $D = I \times I$ be the closed domain in $\mathbb{R}^2$. For $i\in \{1,\dots ,m\}$, let $q_i=(i/(m+1), 1/2)\in D$, and let $Q_m=\{q_1,\dots ,q_m\}$. A {\it geometric} $m$-{\it braid} $\beta$ is a 1-manifold embedded in $D\times I$ with the boundary condition $\partial \beta=\beta\cap \partial(D\times I)=Q_m\times \{0,1\}$ such that the restriction $\pi|_{\beta}:\beta\to I$ of the natural projection $\pi:D\times I\to I$ is an $m$-fold covering map. 
We call $Q_m\times \{0\}$ (resp. $Q_m\times \{1\}$) the set of {\it starting points} (resp. the set of {\it terminal points}) of geometric $m$-braids. 
Take a {\it string} $\alpha$ {\it of} a geometric $m$-braid $\beta$, that is a connected component of $\beta$. 
If $\alpha\cap (D\times \{0\})=\{q_i\}\times \{0\}$ (resp. $\alpha\cap (D\times \{1\})=\{q_j\}\times \{1\}$), then we call the point  $q_i$ (resp. the point $q_j$) the {\it starting point} (resp. the {\it terminal point}) {\it of} $\alpha$. 
The string of $\beta$ starting from $q_i$ is called the $i$-{\it th string of} $\beta$. 
When we draw pictures of geometric $m$-braids, we place their starting points on the top and terminal points on the bottom. 

Let $B_m$ be the set of the equivalence classes of geometric $m$-braids under ambient isotopies of $D \times I$ that fix $\partial (D \times I)$ pointwise. 
The set $B_m$ forms a group with the ``stacking product'' as follows: If $\beta_1$ and $\beta_2$ are (the equivalence classes of) $m$-braids then $\beta_1\beta_2$ is the (equivalence class of) $m$-braid obtained by placing $\beta_1$ above $\beta_2$ so that the terminal points of $\beta_1$ coincide with the starting points of $\beta_2$. We call the group $B_m$ the {\it braid group of degree} $m$. For simplicity, we denote the equivalence class of a geometric $m$-braid $\beta$ by the same letter $\beta\in B_m$, and the strings of an element of $B_m$ refer to those of its geometric representative.  

A geometric $m$-braid $\beta$ is called {\it generic} if its image under the projection $D \times I \to I \times I, (x,y,t) \mapsto (x,t)$ has transverse double points only.
Endowing the image of $\beta$ with the over/under information of strings around each transverse double point by a small break in the under arc, we obtain the {\it diagram} of a generic geometric $m$-braid $\beta$, which we denote by $\Gamma_{\beta}$.
The double points in $\Gamma_{\beta}$ are called {\it crossings}. 
There are two types of crossings: positive $\npc$ and negative $\nnc$.

Any element in $B_m$ admits a generic representative.
Abusing notation, for $\beta \in B_m$ we denote by $\Gamma_{\beta}$ the diagram of its generic representative (which is not unique).

It is well-known that $B_m$ has the following presentation \cite{artin}:
\begin{equation*}
B_m = 
 \left<\sigma_1, \dots, 
\sigma_{m-1}\ \left|\ \begin{array}{rrr}\sigma_i\sigma_j\sigma_i=\sigma_j\sigma_i\sigma_j
\         (|i-j|=1),\\
\sigma_i\sigma_j=\sigma_j\sigma_i \ (|i-j|>1)
                  \end{array}\right.\right>,
\end{equation*}
where $\sigma_i$ is called the $i$-th standard generator of $B_m$, and is defined as the $m$-braid shown in Figure~\ref{fig:sigma_i}. 

\begin{figure}
\[
\begin{tikzpicture}[x=6mm, y=6mm]
\draw (0,0) -- (0,3);
\draw (3,0) -- (3,3);
\draw (5,0) -- (7,3);
\draw (7,0) -- (6.2,1.2);
\draw (5,3) -- (5.8,1.8);
\draw (9,0) -- (9,3);
\draw (12,0) -- (12,3);
\draw (1.5,1.5) node{$\dots$};
\draw (10.5,1.5) node{$\dots$};
\draw (0,3.5) node{\small $1$};
\draw (3,3.5) node{\small $i-1$};
\draw (5,3.5) node{\small $i$};
\draw (7,3.5) node{\small $i+1$};
\draw (9,3.5) node{\small $i+2$};
\draw (12,3.5) node{$m$};
\end{tikzpicture}
\]
\caption{the standard generator $\sigma_i \in B_m$}
\label{fig:sigma_i}
\end{figure}
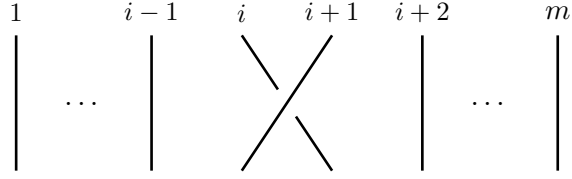

Let $S_m$ be the symmetric group of degree $m$, namely the set of bijections from $\{1,\dots ,m\}$ to itself.
The group operation of $S_m$ is the composition of functions, i.e. the product $\pi_1\pi_2$ is defined by $(\pi_1\pi_2)(j)=\pi_1(\pi_2(j))$ for each $\pi_1, \pi_2\in S_m$ and each $j\in \{1,\dots ,m\}$. 

Take an element $\beta\in B_m$. For each $j\in \{1,\dots ,m\}$, let $|\beta|(j)$ be the  
integer with $1\leq |\beta|(j)\leq m$ such that the terminal point of the $|\beta|(j)$-th string of $\beta$ is $q_j$. Then, we have a permutation $|\beta|\in S_m$ and have a surjective group homomorphism $|\cdot|: B_m\to S_m,\ \beta \mapsto |\beta|$. 

We recall the definition of the {\it crossing matrix} of $m$-braids \cite{BGKN02}.
Let $\beta \in B_m$ and take its diagram $\Gamma_{\beta}$.
The crossing matrix $C(\beta)$ of $\beta$ is defined to be the $m\times m$ matrix whose $(i,j)$ entry is the algebraic number of crossings in $\Gamma_{\beta}$ between the $i$-th and $j$-th strings, where the $i$-th string is over the $j$-th string.
Here, we assign $+1$ to positive crossings and $-1$ to negative crossings.
By definition, the diagonal entries of $C(\beta)$ are zero. 
Note that $C(\beta)$ does not depend on the choice of a diagram $\Gamma_{\beta}$. 
Figure~\ref{fig:beta_sample} shows an example for $\beta=\sigma_2^{-1}\sigma_1^2\sigma_2^3 \sigma_1^{-1}\sigma_2\in B_3$. 
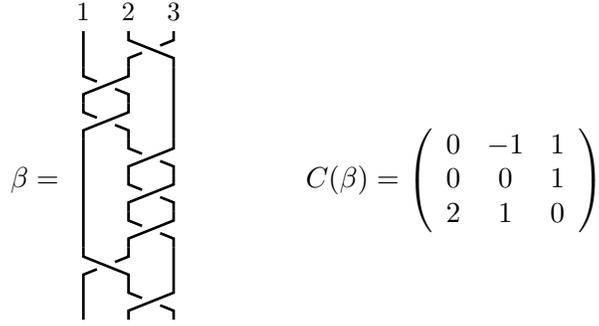
\begin{figure} 
\[
\beta= \begin{tikzpicture}[baseline=50pt, x=6mm, y=2.4mm]
\draw (0,16) node[above]{\small $1$};
\draw (1,16) node[above]{\small $2$};
\draw (2,16) node[above]{\small $3$};
\draw (0,0) -- (0,2);
\pc{1,0}
\nc{0,2}
\draw (2,2) -- (2,4);
\draw (0,4) -- (0,10);
\pc{1,4}
\pc{1,6}
\pc{1,8}
\pc{0,10}
\draw (2,10) -- (2,14);
\pc{0,12}
\draw (0,14) -- (0,16);
\nc{1,14}
\end{tikzpicture}
\hspace{4em} 
C(\beta)=\left (
\begin{array}{ccc}
0&-1&1\\
0&0&1\\
2&1&0
\end{array}
\right )
\]
\caption{the crossing matrix for $\beta = \sigma_2^{-1} \sigma_1^2 \sigma_2^3 \sigma_1^{-1} \sigma_2$}
\label{fig:beta_sample}
\end{figure}
Let ${\rm Mat}_m^0$ be the set of $m\times m$ integer matrices whose diagonal entries are all $0$. We have a map 
\[
C:B_m\to {\rm Mat}_m^0, \quad
\beta \mapsto C(\beta). 
\]

Matrix addition endows ${\rm Mat}_m^0$ with a commutative group structure. 
For $M \in {\rm Mat}_m^0$ and a permutation $\pi\in S_m$, let $\pi(M)\in {\rm Mat}_m^0$ be the matrix obtained by rearranging the rows and columns of $M$ according to $\pi$, i.e. the $(i,j)$ entry of $\pi(M)$ is the $(\pi^{-1}(i), \pi^{-1}(j))$ entry of $M$.
Then, we have an action of $S_m$ on ${\rm Mat}_m^0$.

Note that for any $\beta_1, \beta_2\in B_m$, the $i$-th string of $\beta_1$ is connected to the $|\beta_1|^{-1}(i)$-th string of $\beta_2$, and thus we have the following lemma.

\begin{lem}[\cite{BGKN02}] \label{lem:C_crossed}
The map $C$ is a crossed homomorphism. Namely, 
for any $\beta_1, \beta_2\in B_m$, $C(\beta_1\beta_2)=C(\beta_1)+|\beta_1|({C(\beta_2)})$.
Thus we have a group homomorphism $\widetilde{C}:B_m\to {\rm Mat}_m^0\rtimes S_m$ defined by $\widetilde{C}(\beta)=(C(\beta), |\beta|)$. 
\end{lem}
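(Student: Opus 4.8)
The plan is to compute $C(\beta_1\beta_2)$ directly from a stacked diagram and to read off the two contributions entry by entry. First I would fix generic representatives of $\beta_1$ and $\beta_2$ and form the diagram $\Gamma_{\beta_1\beta_2}$ by placing $\Gamma_{\beta_1}$ above $\Gamma_{\beta_2}$. Since every crossing of this diagram lies at a definite height, it belongs either to the $\beta_1$-part or to the $\beta_2$-part, and no new crossings are created at the interface where the terminal points of $\beta_1$ meet the starting points of $\beta_2$. Thus the crossings of $\Gamma_{\beta_1\beta_2}$ are partitioned into those coming from $\Gamma_{\beta_1}$ and those coming from $\Gamma_{\beta_2}$, and the algebraic count in each entry of $C(\beta_1\beta_2)$ splits as a sum over these two sets.

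The key bookkeeping step is to identify, for each index $i$, which strings of $\beta_1$ and $\beta_2$ constitute the $i$-th string of the product. As noted just before the statement, the $i$-th string of $\beta_1$ terminates at the starting point of the $|\beta_1|^{-1}(i)$-th string of $\beta_2$; hence the $i$-th string of $\beta_1\beta_2$ is the concatenation of the $i$-th string of $\beta_1$ with the $|\beta_1|^{-1}(i)$-th string of $\beta_2$. Consequently, a crossing in the $\beta_1$-part where the $i$-th string of $\beta_1\beta_2$ passes over the $j$-th one is exactly a crossing of $\Gamma_{\beta_1}$ in which string $i$ is over string $j$, while a crossing in the $\beta_2$-part is exactly a crossing of $\Gamma_{\beta_2}$ in which string $|\beta_1|^{-1}(i)$ is over string $|\beta_1|^{-1}(j)$. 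Signs are intrinsic to each geometric crossing and so are preserved under this relabeling. Therefore
\[
C(\beta_1\beta_2)_{ij} = C(\beta_1)_{ij} + C(\beta_2)_{|\beta_1|^{-1}(i),\, |\beta_1|^{-1}(j)}.
\]

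It remains to recognize the second summand as the appropriate entry of the permuted matrix. By the definition of the $S_m$-action, the $(i,j)$ entry of $|\beta_1|(C(\beta_2))$ is precisely the $(|\beta_1|^{-1}(i), |\beta_1|^{-1}(j))$ entry of $C(\beta_2)$, so the displayed identity reads $C(\beta_1\beta_2) = C(\beta_1) + |\beta_1|(C(\beta_2))$, which is the crossed homomorphism property. The final assertion about $\widetilde{C}$ is then not a new computation: it is exactly the general construction recorded in the Convention, applied with $\varpi = |\cdot|: B_m \to S_m$ and $c = C$, using that $|\cdot|$ is a group homomorphism.

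I do not anticipate a genuine obstacle; the only point demanding care is the index bookkeeping, namely keeping track of $|\beta_1|^{-1}$ (rather than $|\beta_1|$) when relabeling the strings of $\beta_2$, and checking that this inverse is exactly the one appearing in the chosen convention for the $S_m$-action, so that the permutation lands on the correct side of the equation.
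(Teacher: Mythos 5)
Your proof is correct and follows essentially the same route as the paper, which states the lemma (citing \cite{BGKN02}) after the single key observation that the $i$-th string of $\beta_1$ is connected to the $|\beta_1|^{-1}(i)$-th string of $\beta_2$ in the stacked diagram. Your write-up simply fills in the details of that observation: the entry-wise splitting of crossings, the matching with the definition of the $S_m$-action on ${\rm Mat}_m^0$, and the appeal to the general semi-direct product construction from the Convention, all of which are exactly what the paper intends.
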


Let $H=H_m$ be the $m$-fold direct sum of the infinite cyclic group 
${\mathbb Z}$, which is generated by $m$ letters $X_1, \dots, X_m$.
The group $S_m$ acts on $H$ by $\pi(X_k) = X_{\pi(k)}$. 
For $M=(\alpha_{kl})\in {\rm Mat}_m^0$ and $i\in \{1,\dots, m\}$, let $f_i(M)$ be the element of $H$ defined by 
\[
f_i(M)=\sum_{k=1}^m \alpha_{ki} X_k. 
\]

\begin{lem} \label{lem:f_i_pi}
For any $\pi\in S_m$, any $M\in {\rm Mat}_m^0$ and any $i\in \{1,\dots ,m\}$, we have $f_i(\pi(M))=\pi(f_{\pi^{-1}(i)}(M))$.
\end{lem}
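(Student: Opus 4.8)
The plan is to prove the identity by a direct unwinding of the definitions, followed by a single reindexing of the defining sum. Both sides of the claimed equality live in $H$, so it suffices to expand each side as an integer linear combination of the generators $X_1, \dots, X_m$ and match coefficients.

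First I would write $M = (\alpha_{kl})$ and recall from the definition of the $S_m$-action on ${\rm Mat}_m^0$ that the $(k,i)$ entry of $\pi(M)$ equals the $(\pi^{-1}(k), \pi^{-1}(i))$ entry of $M$, namely $\alpha_{\pi^{-1}(k),\,\pi^{-1}(i)}$. Substituting this into the definition of $f_i$ gives
\[
f_i(\pi(M)) = \sum_{k=1}^m \alpha_{\pi^{-1}(k),\,\pi^{-1}(i)}\, X_k .
\]
The key step is then to reindex this sum via the substitution $k = \pi(l)$: as $l$ runs over $\{1,\dots,m\}$ so does $k = \pi(l)$, and the summand becomes $\alpha_{l,\,\pi^{-1}(i)}\, X_{\pi(l)}$. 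This recasts the expression as $\sum_{l=1}^m \alpha_{l,\,\pi^{-1}(i)}\, X_{\pi(l)}$.

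To close the argument I would compute the right-hand side directly. By definition $f_{\pi^{-1}(i)}(M) = \sum_{l=1}^m \alpha_{l,\,\pi^{-1}(i)}\, X_l$, and applying the $S_m$-action (which is $\mathbb{Z}$-linear and sends $X_l \mapsto X_{\pi(l)}$) yields $\pi(f_{\pi^{-1}(i)}(M)) = \sum_{l=1}^m \alpha_{l,\,\pi^{-1}(i)}\, X_{\pi(l)}$. This agrees termwise with the reindexed expression for $f_i(\pi(M))$, establishing the equality.

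I do not anticipate a genuine obstacle here: the statement is a bookkeeping identity, and the only point requiring care is tracking the inverse permutations correctly and verifying that the reindexing $k = \pi(l)$ is a bijection of the summation range (which it is, since $\pi \in S_m$). The one place an error could creep in is confusing $\pi$ with $\pi^{-1}$ when passing between the action on matrix indices and the action on the generators $X_k$, so I would state the coefficient of each $X_k$ explicitly on both sides to make the matching unambiguous.
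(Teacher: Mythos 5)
Your proof is correct and follows essentially the same route as the paper: both unwind the definitions of the $S_m$-action on ${\rm Mat}_m^0$ and of $f_i$, then conclude by the reindexing $k=\pi(l)$ together with $\pi(X_l)=X_{\pi(l)}$. The only cosmetic difference is that the paper performs the reindexing inside the argument of $\pi$, while you reindex first and then expand $\pi(f_{\pi^{-1}(i)}(M))$ separately; the computation is the same.
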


\begin{proof}
Put $(\alpha_{kl}')=\pi(M) \in {\rm Mat}_m^0$. Since $\alpha_{kl}'=\alpha_{\pi^{-1}(k)\pi^{-1}(l)}$, we have $\displaystyle f_i(\pi(M))=\sum_{k=1}^m \alpha_{ki}' X_k=\sum_{k=1}^m \alpha_{\pi^{-1}(k)\pi^{-1}(i)} X_k=\pi \left(\sum_{k=1}^m \alpha_{\pi^{-1}(k)\pi^{-1}(i)} X_{\pi^{-1}(k)}\right)\\=\pi \left(\sum_{k=1}^m \alpha_{k\pi^{-1}(i)} X_k\right)=\pi(f_{\pi^{-1}(i)}(M))$. Thus, we obtain the result.
\end{proof} 

Take $\beta\in B_m$ and $i\in \{1,\dots ,m\}$. Let $\displaystyle f_i(\beta)=f_i(C(\beta)) \in H$. Note that $f_i(\beta)$ tells us the algebraic count of how many times the $i$-th string of $\beta$ has passed under each of the other strings. 
We call $f_i(\beta)$ the $i$-{\it th diving combinational information of} $\beta$. Then, we have the following corollary.

\begin{lem} \label{lem:f_i_crossed}
For any $\beta_1, \beta_2\in B_m$ and any $i\in \{1,\dots ,m\}$, we have $f_i(\beta_1\beta_2)=f_i(\beta_1)+|\beta_1|f_{|\beta_1|^{-1}(i)}(\beta_2)$.
\end{lem}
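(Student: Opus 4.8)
The plan is to deduce this formula as a direct consequence of the three facts already in place: the crossed homomorphism property of $C$ (Lemma~\ref{lem:C_crossed}), the additivity of the map $M \mapsto f_i(M)$, and the equivariance relation of Lemma~\ref{lem:f_i_pi}. Since the statement is really a corollary of these results, I expect the proof to be a short chain of equalities rather than a substantial argument.

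First I would unwind the definition $f_i(\beta_1\beta_2) = f_i(C(\beta_1\beta_2))$ and apply Lemma~\ref{lem:C_crossed} to replace $C(\beta_1\beta_2)$ by $C(\beta_1) + |\beta_1|(C(\beta_2))$. Next I would use the fact that, for each fixed $i$, the assignment $M \mapsto f_i(M) = \sum_{k=1}^m \alpha_{ki}X_k$ is a group homomorphism ${\rm Mat}_m^0 \to H$, which is immediate from its definition since each matrix entry appears linearly. This splits the expression into $f_i(C(\beta_1)) + f_i(|\beta_1|(C(\beta_2)))$.

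Then I would apply Lemma~\ref{lem:f_i_pi} with $\pi = |\beta_1|$ and $M = C(\beta_2)$ to rewrite the second summand as $|\beta_1|\bigl(f_{|\beta_1|^{-1}(i)}(C(\beta_2))\bigr)$. Finally, recalling the definitions $f_i(\beta_1) = f_i(C(\beta_1))$ and $f_{|\beta_1|^{-1}(i)}(\beta_2) = f_{|\beta_1|^{-1}(i)}(C(\beta_2))$ translates the chain of equalities back into the desired identity $f_i(\beta_1\beta_2) = f_i(\beta_1) + |\beta_1|f_{|\beta_1|^{-1}(i)}(\beta_2)$.

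The main (and essentially only) point requiring care is the bookkeeping of the two indices: the outer index $i$ in $f_i$ is left untouched, whereas Lemma~\ref{lem:f_i_pi} shifts the selecting index to $|\beta_1|^{-1}(i)$ while simultaneously acting by $|\beta_1|$ on the resulting element of $H$. Once the additivity of $f_i$ and the equivariance of Lemma~\ref{lem:f_i_pi} are invoked in the correct order, there is no genuine obstacle.
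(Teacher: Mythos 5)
Your proposal is correct and follows exactly the paper's own argument: apply Lemma~\ref{lem:C_crossed} to $C(\beta_1\beta_2)$, use the (linear) additivity of $M \mapsto f_i(M)$, and then apply Lemma~\ref{lem:f_i_pi} with $\pi = |\beta_1|$ to convert $f_i(|\beta_1|C(\beta_2))$ into $|\beta_1|f_{|\beta_1|^{-1}(i)}(C(\beta_2))$. The only difference is cosmetic: the paper leaves the additivity step implicit, whereas you state it explicitly.
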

\begin{proof}By Lemma~\ref{lem:C_crossed}, $f_i(C(\beta_1\beta_2))=f_i(C(\beta_1))+f_i(|\beta_1|C(\beta_2))$. By Lemma~\ref{lem:f_i_pi}, $f_i(|\beta_1|C(\beta_2))=|\beta_1|f_{|\beta_1|^{-1}(i)}(C(\beta_2))$, and we obtain the result.
\end{proof}

\begin{rem}\label{rem:the range of $C$}
The problem of determining the image of a subset of $B_m$ by the map $C: B_m\to {\rm Mat}_m^0$ has been studied as follows:

(1) Let $P_m$ be the {\it pure braid group of degree} $m$, namely the kernel of the homomorphism $|\cdot|: B_m\to S_m$. 
The restriction $C|_{P_m}$ is a group homomorphism by Lemma~\ref{lem:C_crossed}. It is known that $C(P_m)$ is the set of symmetric matrices that belong to ${\rm Mat}_m^0$ and ${\rm Ker}(C|_{P_m})$ is the commutator subgroup $[P_m, P_m]$ of $P_m$ (see \cite[Section 2]{GN18}).

(2)
An element of $B_m$ is called a {\it permutation braid} if it has a diagram whose each crossing is positive and each pair of strands has at most one crossing. Let $\Sigma_m$ be the set of permutation braids in $B_m$. The restriction of the map $|\cdot|:B_m\to S_m$ to $\Sigma_m$ is bijective.
As explained in \cite[p. 296]{BGKN02}, using \cite[Lemma~9.1.6]{thurston} directly, we have that the map $C|_{\Sigma_m} 
: \Sigma_m \to {\rm Mat}_m^0$ is injective and its image $C(\Sigma_m)$ is the set of matrices $L=(L_{ij})\in {\rm Mat}_m^0$ satisfying the following conditions: (i) $L_{ij}=0$ if $i\geq j$;
 (ii) $L_{ij}\in \{0,1\}$; and
 (iii) $L_{ij} =
 L_{jk} = p$ implies $L_{ik} = p$ for all $i,j,k$ with $1\le i < j < k \le m$ and $p \in \{ 0,1\}$.

(3) 
For $\pi\in S_m$, let $\pi^{+}\in B_m$ be the permutation braid with $|\cdot|(\pi^{+})=\pi$. For $\beta\in B_m$, let $a=\beta{(|\beta|^{+})}^{-1}\in P_m$. By Lemma~\ref{lem:C_crossed}, $C(\beta)=C(a)+C({|\beta|}^{+})$. By (1) and (2), 
we obtain that $C(B_m)$ is precisely the set of all sums $M+L$, where $M\in {\rm Mat}_m^0$ is symmetric and $L\in {\rm Mat}_m^0$ satisfies the conditions (i)–(iii) in (2); see also \cite[Corollary 3.2]{BGKN02}.

(4) 
A much harder problem is to determine the image $C(P_m^+)$ of the set of {\it positive pure braids} $P_m^+$. 
Here, a pure braid is called positive if it admits a diagram whose crossings are all positive. In \cite[p. 308]{BGKN02}, it was conjectured that $C(P_m^+)$ is the set of matrices $M=(M_{ij})\in {\rm Mat}_m^0$ that satisfy the following conditions:
(I) $M$ is symmetric; (II) $M_{ij}$ is a nonnegative integer; and
(III) $M_{ij}=M_{jk}=0$ implies $M_{ik}=0$ for all $i,j,k$ with $1\leq i<j<k\leq m$.
It was shown in \cite{BGKN02} that this conjecture is true for $m\leq 3$. More recently, \cite{aya-yoshi, yuko-aya-yoshi} proved that the conjecture is true for $m\leq 6$. The conjecture remains open for $m\geq 7$, although there is an algorithm which, given a matrix 
$M\in {\rm Mat}_m^0$ satisfying conditions (I)–(III), either lists all positive braids whose crossing matrix is $M$ (if any exist) or declares that none exist. See \cite{GN18} for more details.
\end{rem}

\section{The extended first Johnson homomorphism of the automorphism group of a free group} \label{sec:Johnson}

In this section, we review the extended first Johnson homomorphism on the automorphism group of a free group associated with the standard Magnus expansion.
For more details, see \cite{kawazumi, morita}.
See also \cite{Kitano, Perron} for the description of the (higher) Johnson homomorphism in terms of the standard Magnus expansion.

Let $F_m$ be the free group generated by $m$ letters $x_1,\cdots, x_m$. 
The abelianization $F_m^{\rm abel} = F_m/[F_m, F_m]$ is a free abelian group of rank $m$.
For $x\in F_m$, let $[x]\in F_m^{\rm abel}$ be the image of $x$ by the canonical projection $F_m\to F_m^{\rm abel}$.
Then, the elements $[x_i]$, $1\le i \le m$, form a $\mathbb{Z}$-basis of $F_m^{\rm abel}$. We identify $F_m^{\rm abel}$ with $H$, the group introduced in Section~\ref{sec:braids}, by setting $X_i:=[x_i]$. 
For an integer $n>0$, let $H^{\otimes n}=\underbrace{H\otimes \cdots \otimes H}_{n}$ be the $n$-fold tensor product and let $H^{\otimes 0}={\mathbb Z}$.
The completed tensor algebra
\[
\widehat{T}:=\prod_{n=0}^\infty H^{\otimes n}
\]
is naturally isomorphic to the ring of non-commutative formal power series in $X_i$.

The {\it standard Magnus expansion of} $F_m$ is a unique map $\theta: F_m\to \widehat{T}$ characterized by the following properties:
\begin{enumerate}
    \item[(M0)] $\theta(e) = 1$, where $e$ is the identity element of $F_m$;
    \item[(M1)] $\theta(xy) = \theta(x)\theta(y)$ for any $x,y \in F_m$;
    \item[(M2)] $\theta(x_i)=1+X_i$ for any $1\leq i\leq m$.
\end{enumerate}
Note that $\theta(x_i^{-1}) = 1 - X_i + {X_i}^{\otimes 2} - {X_i}^{\otimes 3} + \cdots$. 
In general, for any $x\in F_m$ one can write  
\[
\theta(x)=1+\theta_1(x)+\theta_2(x)+\cdots,
\]
where $\theta_n(x) \in H^{\otimes n}$. 
In this paper, we will use only the terms of $\theta$ up to degree two. 
For the degree one part, we have
\[
\theta_1(x) = [x]
\]
for any $x\in F_m$.
For the degree two part, we will use the following property:

\begin{lem} \label{lem:theta_2}
For any $x, y\in F_m$, we have
\[
\theta_2(y^{-1}xy)=\theta_2(x)+([x]\otimes [y]-[y]\otimes [x]).
\]
\end{lem}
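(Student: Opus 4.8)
The plan is to derive the identity directly from multiplicativity (M1) of the Magnus expansion together with the behaviour of $\theta$ under inversion, by writing $\theta(y^{-1}xy) = \theta(y^{-1})\,\theta(x)\,\theta(y)$ and then reading off the degree-two part of the right-hand side. I would abbreviate $\theta(x) = 1 + [x] + \theta_2(x) + \cdots$ and $\theta(y) = 1 + [y] + \theta_2(y) + \cdots$, using that the degree-one term of any $\theta(z)$ equals $[z]$ and that all suppressed terms lie in degrees $\ge 3$. Since $\widehat{T}$ is graded and each factor begins with $1$, only finitely many monomials contribute in each fixed degree, so extracting the degree-two component is a finite computation.

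First I would pin down the low-degree terms of the inverse factor $\theta(y^{-1})$. Instead of inverting a power series by hand, I would exploit (M0)--(M1) in the form $\theta(y)\,\theta(y^{-1}) = \theta(e) = 1$: writing $\theta(y^{-1}) = 1 + c_1 + c_2 + \cdots$ with $c_n \in H^{\otimes n}$ and comparing degrees, the degree-one equation forces $c_1 = -[y]$, and the degree-two equation $\theta_2(y) + [y]\otimes c_1 + c_2 = 0$ then yields $c_2 = [y]\otimes[y] - \theta_2(y)$.

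It remains to multiply out $\theta(y^{-1})\,\theta(x)\,\theta(y)$ and collect by degree. The degree-one part is $c_1 + [x] + [y] = [x]$, which is a reassuring consistency check since $[y^{-1}xy] = [x]$. Using that the degree-two part of $\theta(x)\theta(y)$ is $\theta_2(x) + [x]\otimes[y] + \theta_2(y)$, the degree-two part of the triple product equals
\[
c_2 + c_1\otimes\bigl([x]+[y]\bigr) + \bigl(\theta_2(x) + [x]\otimes[y] + \theta_2(y)\bigr).
\]
Substituting $c_1 = -[y]$ and $c_2 = [y]\otimes[y] - \theta_2(y)$, the two occurrences of $[y]\otimes[y]$ cancel and the two of $\theta_2(y)$ cancel, leaving exactly $\theta_2(x) + [x]\otimes[y] - [y]\otimes[x]$, as claimed. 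The computation carries no real obstacle; the only point requiring care is the bookkeeping of these cancellations, and conceptually the content of the lemma is that, once they occur, the surviving correction to $\theta_2(x)$ is precisely the antisymmetric term $[x]\otimes[y] - [y]\otimes[x]$.
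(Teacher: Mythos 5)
Your proof is correct and follows essentially the same route as the paper: expand $\theta(y^{-1}xy)=\theta(y^{-1})\theta(x)\theta(y)$ via (M1) and collect the degree-two terms. The only difference is that you derive the expansion $\theta(y^{-1})\equiv 1-[y]+([y]^{\otimes 2}-\theta_2(y))$ from the relation $\theta(y)\theta(y^{-1})=1$, whereas the paper simply asserts it; this is a harmless (indeed welcome) extra justification of the same step.
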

\begin{proof}
Modulo terms of degree $\ge 3$, we have
$\theta(y^{-1}) \equiv 1 - [y] + ( [y]^{\otimes 2} - \theta_2(y))$.
Hence
\begin{align*}
 & \theta(y^{-1}xy) = \theta(y^{-1}) \theta(x) \theta(y) \\
 \equiv &  \, \big( 1 - [y] + ( [y]^{\otimes 2} - \theta_2(y)) \big)
 \big( 1 + [x] + \theta_2(x) \big) \big( 1 + [y] + \theta_2(y) \big) \\
 \equiv & \, 1 + [x] + (\theta_2(x) + [x] \otimes [y] - [y]\otimes [x]).
\end{align*}
This proves the lemma.
\end{proof}

Let ${\rm Aut}(F_m)$ and ${\rm Aut}(H)$ be the automorphism group of the free group $F_m$ and the free abelian group $H$, respectively. 
There is a natural map ${\rm Aut}(F_m) \to {\rm Aut}(H)$ (which is known to be surjective).
For $\varphi \in {\rm Aut}(F_m)$, we denote by $|\varphi| \in {\rm Aut}(H)$ the image of $\varphi$ by this map. 
The natural action of ${\rm Aut}(H)$ on $H$ induces an action on the tensor product $H^{\otimes 2}$ and hence on the space ${\rm Hom}(H, H^{\otimes 2})$ of group homomorphisms from $H$ to $H^{\otimes 2}$.
Explicitly, this action, which we denote by $\odot$, is given as follows: for $g \in {\rm Aut}(H)$ and $F\in {\rm Hom}(H, H^{\otimes 2})$,
\[
(g\odot F)(X)=g^{\otimes 2} \big( F(g^{-1}(X)) \big), \qquad X \in H.
\]

Now let $\varphi \in {\rm Aut}(F_m)$. 
Then the map 
\[
\tau_1^{\theta}(\varphi): H \to H^{\otimes 2}, \quad 
[x] \mapsto \theta_2(x) - |\varphi|^{\otimes 2}
 \big( \theta_2(\varphi^{-1}(x)) \big)
\]
is a group homomorphism (\cite[Lemma~2.2]{kawazumi}). 
The resulting map
\[
\tau_1^{\theta} : {\rm Aut}(F_m) \to {\rm Hom}(H, H^{\otimes 2}), \quad \varphi \mapsto \tau_1^{\theta}(\varphi)
\]
is called the {\it extended first Johnson homomorphism}. 
It is actually not a homomorphism; it is rather a crossed homomorphism:

\begin{lem}[\cite{kawazumi}, Lemma 2.1] \label{lem:tau1_crossed}
For any $\varphi, \psi \in {\rm Aut}(F_m)$, 
\[
\tau_1^{\theta}(\varphi \psi) = \tau_1^{\theta}(\varphi) + |\varphi| \odot \tau_1^{\theta}(\psi).
\]
Thus we obtain the following group homomorphism:
\[
\widetilde{\tau}_1^{\theta} : {\rm Aut}(F_m) \to {\rm Hom}(H,H^{\otimes 2}) \rtimes {\rm Aut}(H), 
\quad 
\varphi \mapsto (\tau_1^{\theta}(\varphi), |\varphi|).
\]
\end{lem}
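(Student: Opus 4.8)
The plan is to verify the crossed-homomorphism identity by evaluating both sides on an arbitrary class $[x] \in H$ and watching a telescoping cancellation, after which the homomorphism statement follows formally from the general Convention. The only structural inputs I would need are two elementary facts about the linearization map $|\cdot|: {\rm Aut}(F_m) \to {\rm Aut}(H)$: since it is a group homomorphism, $|\varphi\psi| = |\varphi|\,|\psi|$, hence $|\varphi\psi|^{\otimes 2} = |\varphi|^{\otimes 2}|\psi|^{\otimes 2}$ as operators on $H^{\otimes 2}$; and since $|\varphi|$ is induced on the abelianization, $|\varphi|([x]) = [\varphi(x)]$, so that $|\varphi|^{-1}([x]) = [\varphi^{-1}(x)]$ for every $x \in F_m$. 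The degree-two Magnus data $\theta_2$ may be treated as opaque throughout.

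First I would expand the left-hand side. Using $(\varphi\psi)^{-1} = \psi^{-1}\varphi^{-1}$ together with the first fact, the defining formula for $\tau_1^{\theta}$ gives
\[
\tau_1^{\theta}(\varphi\psi)([x]) = \theta_2(x) - |\varphi|^{\otimes 2}|\psi|^{\otimes 2}\big(\theta_2(\psi^{-1}\varphi^{-1}(x))\big).
\]

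Next I would unwind the right-hand side, the point being the placement of the twist in the $\odot$-action. By the definition of $\odot$ and the second fact,
\[
(|\varphi|\odot \tau_1^{\theta}(\psi))([x]) = |\varphi|^{\otimes 2}\big(\tau_1^{\theta}(\psi)([\varphi^{-1}(x)])\big),
\]
and substituting the definition of $\tau_1^{\theta}(\psi)$ evaluated at the class $[\varphi^{-1}(x)]$ produces
\[
|\varphi|^{\otimes 2}\big(\theta_2(\varphi^{-1}(x))\big) - |\varphi|^{\otimes 2}|\psi|^{\otimes 2}\big(\theta_2(\psi^{-1}\varphi^{-1}(x))\big).
\]
Adding $\tau_1^{\theta}(\varphi)([x]) = \theta_2(x) - |\varphi|^{\otimes 2}(\theta_2(\varphi^{-1}(x)))$, the two terms $|\varphi|^{\otimes 2}(\theta_2(\varphi^{-1}(x)))$ appearing with opposite signs cancel, and what remains is exactly the left-hand side computed above. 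Since $[x]$ was arbitrary and both sides are group homomorphisms $H \to H^{\otimes 2}$, agreement on the generators $[x]$ yields equality of the maps.

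Finally, for the second assertion I would simply invoke the Convention from the Introduction: taking $G = {\rm Aut}(F_m)$, $K = {\rm Aut}(H)$, $\varpi = |\cdot|$, and $A = {\rm Hom}(H, H^{\otimes 2})$ equipped with the $\odot$-action, the crossed-homomorphism identity just established is precisely the hypothesis guaranteeing that $\widetilde{\tau}_1^{\theta}(\varphi) = (\tau_1^{\theta}(\varphi), |\varphi|)$ is a group homomorphism into the semi-direct product. I do not expect a genuine obstacle here: the entire content is the correct bookkeeping of the twist $|\varphi|^{-1}$ in the $\odot$-action, which is exactly what makes the cross terms telescope; the only point demanding care is keeping the order of the two $\otimes 2$-operators consistent with $(\varphi\psi)^{-1} = \psi^{-1}\varphi^{-1}$.
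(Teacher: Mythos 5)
Your proof is correct: the telescoping computation, the placement of the twist $|\varphi|^{-1}([x]) = [\varphi^{-1}(x)]$ in the $\odot$-action, and the appeal to the Convention for the semi-direct product statement are all exactly right. Note that the paper itself gives no proof of this lemma --- it is quoted verbatim from Kawazumi's paper (Lemma 2.1 there) --- so there is nothing internal to compare against; your direct verification is the standard argument and is the one carried out in the cited source. The only cosmetic remark is that your closing sentence about ``agreement on the generators $[x]$'' understates what you proved: since every element of $H$ is of the form $[x]$ for some $x \in F_m$, you verified equality at every element of $H$ outright, with no appeal to generators or to the homomorphism property of either side needed.
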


\begin{rem}
The {\it IA-automorphism subgroup of} ${\rm Aut}(F_m)$ is defined to be ${\rm IA}(F_m):= \{ \varphi \in {\rm Aut}(F_m) \mid |\varphi| = {\rm id}_H \}$.
The restriction $\tau_1 = \tau_1^{\theta}|_{{\rm IA}(F_m)}$ is a homomorphism, and it is called the {\it first Johnson homomorphism}. 
In fact, one can extend $\tau_1$ to the whole group ${\rm Aut}(F_m)$ using any {\it generalized Magnus expansion}, which is a map $\theta: F_m \to \widehat{T}$ satisfying conditions (M0), (M1), (M2) and a condition milder than (M2): 
\begin{enumerate}
\item[(M2')] $\theta(x_i) = 1 + X_i + (\text{terms of degree $\ge 2$})$ for any $1 \le i \le m$.
\end{enumerate}
See \cite{kawazumi} for more details.
\end{rem}

\section{The extended first Johnson homomorphism of a braid group} \label{sec:Candtau}
The braid group $B_m$ is isomorphic to the mapping class group of the $m$-marked disk $(D, Q_m)=(D, \{q_1,\dots ,q_m\})$ relative to the boundary (cf. \cite{artin, birman}).
For $i\in\{1,\dots, m-1\}$, the standard generator $\sigma_i$ of $B_m$ corresponds to the isotopy class of a self-homeomorphism of $(D, Q_m)$ which twists a sufficiently small disk neighborhood of the segment $\overline{q_i q_{i+1}}$ by $180^{\circ}$ counterclockwise. 
Let $q_0=(1/2, 0)\in {\partial D}$ and let $x_i$ be (the homotopy class of) a loop in $D\setminus Q_m$ with base point $q_0$ which goes along the segment $\overline{q_0q_i}$, turns around $q_i$ clockwise and comes back along the segment as depicted in Figure~\ref{fig:sigma_on_x}.
The fundamental group $\pi_1(D\setminus Q_m, q_0)$ is a free group with $m$ generators $x_1,\dots ,x_m$, and we naturally identify it with the free group $F_m$ introduced in Section~\ref{sec:Johnson}. 
The geometric background above induces a homomorphism 
\[
\Phi: B_m\to {\rm Aut}(F_m) 
\]
such that
$$\sigma_i\mapsto \left\{
\begin{array}{ccll}
\displaystyle x_i & \mapsto & x_{i+1} & \\
\displaystyle x_{i+1} & \mapsto & x_{i+1}^{-1}x_ix_{i+1} & \\
\displaystyle x_{k} & \mapsto & x_{k} & {\rm for}\ k\ne i,i+1 \\
\end{array}
\right.$$
and
$$\sigma_i^{-1}\mapsto \left\{
\begin{array}{ccll}
\displaystyle x_i & \mapsto & x_{i}x_{i+1}x_{i}^{-1} & \\
\displaystyle x_{i+1} & \mapsto & x_i & \\
\displaystyle x_{k} & \mapsto & x_{k} & {\rm for}\ k\ne i,i+1 \\
\end{array}
\right.$$
(see also Figure~\ref{fig:sigma_on_x}).
In \cite{artin}, Artin proved that $\Phi$ is injective. 
Through $\Phi$, we regard $B_m$ as a subgroup of ${\rm Aut}(F_m)$.
For any $\beta \in B_m$, the automorphism $|\Phi(\beta)| \in {\rm Aut}(H)$ is identical to the action of the permutation $|\beta| \in S_m$.

\begin{figure}    
\[
\begin{tikzpicture}[baseline=-5pt, x=4mm, y=4mm]
\draw (-5,-4) -- (5,-4) -- (5,4) -- (-5,4) -- (-5,-4); 
\draw (-1.6,0) circle[radius=0.6] node[above=5pt]{\footnotesize $i$};
\draw (1.6,0) circle[radius=0.6] node[above=5pt]{\footnotesize $i+1$};
\draw[->] (-1,0) -- (-1,-0.2);
\draw[->] (2.2,0) -- (2.2,-0.2);
\draw (0,-4) -- (-1.38,-0.51);
\draw (-1.75,-1.25) node{\small $x_i$};
\draw (0,-4) -- (1.38,-0.51);
\draw (2.25,-1.25) node{\small $x_{i+1}$};
\fill (0,-4) circle[radius=1.6pt]; 
\fill (-1.6,0) circle[radius=1.6pt];
\fill (1.6,0) circle[radius=1.6pt];
\end{tikzpicture}
\quad 
\overset{\sigma_i}{\longrightarrow}
\quad 
\begin{tikzpicture}[baseline=-5pt, x=4mm, y=4mm]
\draw (-5,-4) -- (5,-4) -- (5,4) -- (-5,4) -- (-5,-4); 
\draw (-1.6,0) circle[radius=0.6] node[above=5pt]{\footnotesize $i$};
\draw (1.6,0) circle[radius=0.6] node[above=5pt]{\footnotesize $i+1$};
\draw[->] (-1,0) -- (-1,-0.2);
\draw[->] (2.2,0) -- (2.2,-0.2);
\draw (0,-4) -- (1.38,-0.51);
\draw (-1.75,-1.25) node{\small $x_{i+1}^{-1}x_ix_{i+1}$};
\draw (2.25,-1.25) node{\small $x_{i+1}$};
\draw (-1.18,0.42) ..controls(1,3)and(3.5,2).. (4,0.25) to[bend left=15] (4,-0.5) to[bend left=35] (0,-4);
\fill (0,-4) circle[radius=1.6pt]; 
\fill (-1.6,0) circle[radius=1.6pt];
\fill (1.6,0) circle[radius=1.6pt];
\end{tikzpicture}
\]
\caption{the action of $\sigma_i$ on generators $x_1,\ldots,x_m$}
\label{fig:sigma_on_x}
\end{figure}
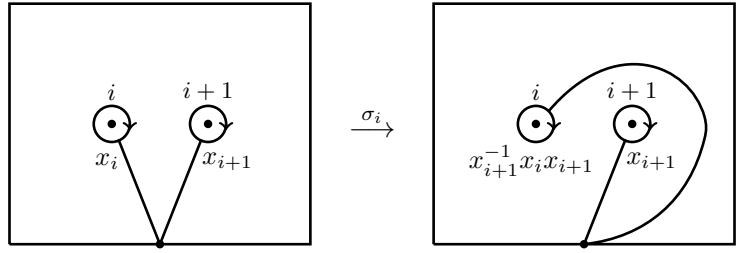

We now consider the restriction of the extended first Johnson homomorphism to $B_m$.
Let $\wedge^2 H$ be the second exterior power of $H$, which we regard as an ${\rm Aut}(H)$-submodule of $H^{\otimes 2}$ by $\wedge^2 H \to H^{\otimes 2}, X \wedge Y \mapsto X \otimes Y - Y \otimes X$. 

The following proposition was proved by Kawazumi~\cite{kawazumi06}. 
For completeness of the exposition, we give its proof as well. 

\begin{prop}[\cite{kawazumi06}, Lemma~2.1] \label{prop:tau1sigma}
For the $i$-th standard generator $\sigma_i$ of $B_m$, we have:
\begin{enumerate}
\item[$(1)$] 
$\tau_1^{\theta}(\sigma_i)(X_i)=X_i\wedge X_{i+1}$ and $\tau_1^{\theta}(\sigma_i)(X_k)=0$ if $k\ne i$; 
\item[$(2)$]
$\tau_1^{\theta}(\sigma_i^{-1})(X_{i+1})=X_i\wedge X_{i+1}$ and $\tau_1^{\theta}(\sigma_i^{-1})(X_k)=0$ if $k\ne i+1$.
\end{enumerate}
\end{prop}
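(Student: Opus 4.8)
The plan is to evaluate the defining formula for $\tau_1^{\theta}$ directly on the basis $X_1,\dots,X_m$ of $H$. Recall that for $\varphi\in {\rm Aut}(F_m)$ one has
\[
\tau_1^{\theta}(\varphi)(X_k)=\theta_2(x_k)-|\varphi|^{\otimes 2}\big(\theta_2(\varphi^{-1}(x_k))\big).
\]
For part $(1)$ I take $\varphi=\Phi(\sigma_i)$, so that $\varphi^{-1}=\Phi(\sigma_i^{-1})$, and for part $(2)$ I take $\varphi=\Phi(\sigma_i^{-1})$, so that $\varphi^{-1}=\Phi(\sigma_i)$; in both cases $|\varphi|$ is the transposition swapping $X_i$ and $X_{i+1}$ and fixing the remaining basis vectors. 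The first simplification is that $\theta_2(x_k)=0$ for every generator, since $\theta(x_k)=1+X_k$ has no degree-two part; hence the first term drops out and the entire computation reduces to evaluating $\theta_2$ on the words $\varphi^{-1}(x_k)$, whose explicit forms are the images recorded in the displays defining $\Phi(\sigma_i)$ and $\Phi(\sigma_i^{-1})$.

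Next I would dispose of the easy cases. Whenever $\varphi^{-1}$ sends $x_k$ to a single generator (possibly another one) or fixes it, we get $\theta_2(\varphi^{-1}(x_k))=0$, giving $\tau_1^{\theta}(\varphi)(X_k)=0$; this covers all indices $k\ne i$ in part $(1)$ and all $k\ne i+1$ in part $(2)$. The only nontrivial input is a conjugate of a generator, namely $\Phi(\sigma_i^{-1})(x_i)=x_ix_{i+1}x_i^{-1}$ in part $(1)$ and $\Phi(\sigma_i)(x_{i+1})=x_{i+1}^{-1}x_ix_{i+1}$ in part $(2)$. For these I apply Lemma~\ref{lem:theta_2}, together with $[x_i^{-1}]=-X_i$ and $\theta_2(x_j)=0$, to obtain in both cases
\[
\theta_2\big(\varphi^{-1}(x_\ast)\big)=X_i\otimes X_{i+1}-X_{i+1}\otimes X_i=X_i\wedge X_{i+1},
\]
where $x_\ast=x_i$ in part $(1)$ and $x_\ast=x_{i+1}$ in part $(2)$.

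Finally I would apply the action $|\varphi|^{\otimes 2}$ of this transposition, which sends $X_i\wedge X_{i+1}$ to $-X_i\wedge X_{i+1}$; combined with the overall minus sign in the defining formula this yields $\tau_1^{\theta}(\varphi)(X_\ast)=X_i\wedge X_{i+1}$, matching the stated values and confirming that the image lies in $\wedge^2 H$. The computation is essentially routine, so the only real obstacle is careful bookkeeping: one must insert the correct inverse automorphism $\varphi^{-1}$ into the defining formula, keep track of the sign $[x_i^{-1}]=-X_i$ when applying Lemma~\ref{lem:theta_2}, and not forget the sign introduced by the transposition action, making sure these combine to the single positive factor $X_i\wedge X_{i+1}$.
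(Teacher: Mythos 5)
Your proposal is correct and follows essentially the same route as the paper's proof: both reduce to $\tau_1^{\theta}(\varphi)(X_k) = -|\varphi|^{\otimes 2}\bigl(\theta_2(\varphi^{-1}(x_k))\bigr)$ using $\theta_2(x_k)=0$, dispose of the cases where $\varphi^{-1}(x_k)$ is a single generator, evaluate the one conjugate word via Lemma~\ref{lem:theta_2}, and cancel the minus sign of the definition against the sign flip from the transposition action. The only (immaterial) difference is that you carry out part $(2)$ explicitly, where the paper simply notes it is similar.
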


\begin{proof}
We prove (1). First note that for any $k\in \{ 1,\ldots, m\}$ we have $\tau_1^{\theta}(\sigma_i)(X_k) = - |\sigma_i|^{\otimes 2}(\theta_2(\sigma_i^{-1}(x_k)))$ since $\theta_2(x_k) = 0$. Let $k \neq i, i+1$.
Then $\sigma_i^{-1}(x_k) = x_k$ and 
\[
\tau_1^{\theta}(\sigma_i)(X_k) = - |\sigma_i|^{\otimes 2}(\theta_2(\sigma_i^{-1}(x_k))) = 0.
\]
Let $k = i$. Then $\theta_2(\sigma_i^{-1}(x_i)) = \theta_2(x_ix_{i+1}x_i^{-1}) = \theta_2(x_i) + X_i \wedge X_{i+1} = X_i \wedge X_{i+1}$ by Lemma~\ref{lem:theta_2}.
Hence 
\[
\tau_1^{\theta}(\sigma_i)(X_i) = - |\sigma_i|^{\otimes 2}(\theta_2(\sigma_i^{-1}(x_i))) = - |\sigma_i|^{\otimes 2}(X_i \wedge X_{i+1})
= X_{i} \wedge X_{i+1}.
\]
Let $k = i+1$. 
Then $\sigma_i^{-1}(x_{i+1}) = x_i$ and 
\[
\tau_1^{\theta}(\sigma_i)(X_{i+1}) = - |\sigma_i|^{\otimes 2}(\theta_2(\sigma_i^{-1}(x_{i+1}))) = 0.
\]
The proof of (2) is similar. 
\end{proof}

By Proposition~\ref{prop:tau1sigma} and the fact that $\tau_1^{\theta}$ is a crossed homomorphism, we see that the restriction of $\tau_1^{\theta}$ to $B_m$ takes values in ${\rm Hom}(H, \wedge^2 H)$.

Let $\delta : {\rm Mat}_m^0 \to {\rm Hom}(H, \wedge^2 H)$ be the map defined by
\[
\delta(M)(X_i) := X_i \wedge f_i(M),
\]
where $f_i(M) \in H$ is the element defined in Section~\ref{sec:braids}. 
The map $\delta$ is injective and $S_m$-equivariant.
To see the $S_m$-equivariance, we compute 
\begin{align*}
\delta(\pi(M))(X_i) &= X_i \wedge f_i(\pi(M)) \\
&= X_i \wedge \pi (f_{\pi^{-1}(i)}(M)) \\
&= \pi^{\otimes 2}( X_{\pi^{-1}(i)} \wedge f_{\pi^{-1}(i)}(M)) \\ 
&= (\pi \odot \delta(M))(X_i).
\end{align*}
Here, in the second line we have used Lemma~\ref{lem:f_i_pi}.

\begin{thm} \label{thm:tau=C2}
\[
\tau_1^{\theta} = \delta \circ C: B_m \to {\rm Hom}(H, \wedge^2 H).
\]
In other words, for each element $\beta\in B_m$ and for each element $i\in \{1,\dots ,m\}$, we have $\tau_1^{\theta}(\beta)(X_i)=X_i\wedge f_{i}(\beta)$.
\end{thm}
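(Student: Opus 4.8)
The plan is to exploit the fact that both $\tau_1^{\theta}|_{B_m}$ and $\delta\circ C$ are crossed homomorphisms from $B_m$ to ${\rm Hom}(H,\wedge^2 H)$ \emph{with respect to the same action} of $B_m$ (namely the $\odot$-action of ${\rm Aut}(H)$ pulled back along $\beta\mapsto|\Phi(\beta)|$, which on permutation automorphisms agrees with the $S_m$-action on ${\rm Mat}_m^0$). Once this is set up, the equality $\tau_1^{\theta}=\delta\circ C$ reduces to a check on a generating set of $B_m$, for which the standard generators $\sigma_1,\dots,\sigma_{m-1}$ are the obvious choice.

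First I would confirm that $\delta\circ C$ is a crossed homomorphism. The map $\delta$ is additive (because $M\mapsto f_i(M)$ is additive) and $S_m$-equivariant, as verified just before the statement; thus applying $\delta$ to the identity $C(\beta_1\beta_2)=C(\beta_1)+|\beta_1|(C(\beta_2))$ of Lemma~\ref{lem:C_crossed} gives $\delta(C(\beta_1\beta_2))=\delta(C(\beta_1))+|\beta_1|\odot\delta(C(\beta_2))$, which is precisely the crossed-homomorphism identity. By Lemma~\ref{lem:tau1_crossed}, $\tau_1^{\theta}$ obeys the same identity for the same action.

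Next I would record the elementary fact that the difference $d:=\tau_1^{\theta}-\delta\circ C$ of two crossed homomorphisms for a common action is again a crossed homomorphism, and that its vanishing locus $\{\beta\in B_m\mid d(\beta)=0\}$ is a subgroup: it contains the identity, is closed under products by $d(\beta_1\beta_2)=d(\beta_1)+|\beta_1|\odot d(\beta_2)$, and is closed under inverses by $d(\beta^{-1})=-|\beta|^{-1}\odot d(\beta)$. It therefore suffices to prove $\tau_1^{\theta}(\sigma_i)=\delta(C(\sigma_i))$ for each $i$. For this, I would observe that $\sigma_i$ has a single positive crossing in which the $(i+1)$-th string passes over the $i$-th string, so that $C(\sigma_i)$ has $(i+1,i)$-entry equal to $1$ and all other entries $0$. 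Hence $f_i(C(\sigma_i))=X_{i+1}$ and $f_k(C(\sigma_i))=0$ for $k\ne i$, giving $\delta(C(\sigma_i))(X_i)=X_i\wedge X_{i+1}$ and $\delta(C(\sigma_i))(X_k)=0$ for $k\ne i$. This matches Proposition~\ref{prop:tau1sigma}(1), so the two crossed homomorphisms agree on every $\sigma_i$, and hence everywhere.

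The reduction to generators is purely formal, so the genuine care lies in the first step: one must make sure that the $S_m$-action making $C$ a crossed homomorphism and the ${\rm Aut}(H)$-action $\odot$ making $\tau_1^{\theta}$ a crossed homomorphism are \emph{literally the same} action of $B_m$ on ${\rm Hom}(H,\wedge^2 H)$. The other delicate point is pinning down the sign and orientation of the crossing in $\sigma_i$ so that $C(\sigma_i)$ yields $X_i\wedge X_{i+1}$ and not $X_{i+1}\wedge X_i$; the $S_m$-equivariance computation for $\delta$ recorded before the statement is exactly what reconciles the column-convention of $f_i$ with the diagrammatic convention for $C$.
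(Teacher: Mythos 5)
Your proof is correct and follows essentially the same route as the paper: both arguments use the crossed-homomorphism property (with the identification of the $S_m$-action and the $\odot$-action, plus the $S_m$-equivariance of $\delta$) to reduce the equality $\tau_1^{\theta}=\delta\circ C$ to the standard generators, and then match Proposition~\ref{prop:tau1sigma}(1) against the computation that $C(\sigma_i)$ has a single nonzero entry $1$ in position $(i+1,i)$, so $f_i(\sigma_i)=X_{i+1}$. Your explicit verification that the vanishing locus of the difference is a subgroup only spells out what the paper leaves implicit in the phrase ``it is sufficient to compare their values on the standard generators.''
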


\begin{proof}
Since both the maps $\tau_1^{\theta}$ and $\delta \circ C$ are crossed homomorphisms, it is sufficient to compare their values on the standard generators of $B_m$. 

Let $i,j \in \{ 1,\ldots, m\}$.
By Proposition~\ref{prop:tau1sigma}~(1), $\tau_1^{\theta}(\sigma_j)(X_i)=X_i\wedge X_{i+1}$ if $j=i$, and $\tau_1^{\theta}(\sigma_j)(X_i)=0$ otherwise. On the other hand, only the $(j+1,j)$ entry of the crossing matrix $C(\sigma_j)$ of $\sigma_j$ is $1$ and all the others are $0$. Hence, $f_i(\sigma_j)=X_{i+1}$ if $j=i$, and $f_i(\sigma_j)=0$ otherwise. Thus, $\tau_1^{\theta}(\sigma_j)(X_i)=X_i\wedge f_{i}(\sigma_j)$.
This completes the proof. 
\end{proof}

We have the following commutative diagram: 
\[
\xymatrix{
 & B_m \ar[dl]_-{\widetilde{C}} \ar[dr]^-{\widetilde{\tau}_1^{\theta}} & \\
 {\rm Mat}_m^0 \rtimes S_m \ar[rr] & & {\rm Hom}(H,\wedge^2 H) \rtimes {\rm Aut}(H).
}
\]
Here, the horizontal arrow is induced from $\delta$ and the natural inclusion $S_m \to {\rm Aut}(H)$.
Thus we can rewrite the target of the restriction of the map $\widetilde{\tau}_1^{\theta}$ to $B_m$ as follows:
\[
\widetilde{\tau}_1^{\theta} : B_m \to {\rm Hom}(H,\wedge^2 H) \rtimes S_m, \quad \beta \mapsto (\tau_1^{\theta}(\beta), |\beta|).
\]

\section{Simple braids and cords in a disk} \label{sec:Simple_braids}
In this section, we describe the diving combinatorial information of simple braids. 
Let us introduce one notation: for two elements $g$ and $h$ in a group, 
we define the conjugate of $g$ by $h$ to be $g*h= h^{-1}gh$.  

Recall that the standard generators $\sigma_i$ of $B_m$ are conjugate to each other since $\sigma_{i+1}=\sigma_i*(\sigma_{i+1}\sigma_i)$ for any $i\in \{1,\dots, m-2\}$. Any element of $B_m$ conjugate to these generators (resp.\,the inverses of these generators) is called a {\it positive} (resp.\,{\it negative}) {\it simple braid} of degree $m$. 
Let $SB_m^+$ and $SB_m^-$ the set of positive (resp.\,negative) simple braids of degree $m$. 
We also use the notation
\[
SB_m=SB_m^{+}\cup SB_m^{-}
\]
for the set of all simple braids of degree $m$. 
Recall that $|\cdot|:B_m\to S_m$ is the natural projection.
Then $|SB_m| \subset S_m$ is the set of transpositions of $S_m$.
We also remark that $|SB_m|=|SB_m^{+}|=|SB_m^{-}|$. 
For $i,j\in \{1,\ldots, m\}$ with $i<j$, let us denote by $(i\ j) \in S_m$ the transposition of $i$ and $j$, and set 
\begin{align*}
SB(i\ j)_m^{+} &=\{\beta\in SB_m^{+}\ |\ |\beta|=(i\ j)\}, \\
SB(i\ j)_m^{-} &=\{\beta\in SB_m^{-}\ |\ |\beta|=(i\ j)\}.
\end{align*} 
We also set $SB(i\ j)_m=SB(i\ j)_m^{+}\cup SB(i\ j)_m^{-}$.

Recall that $D$ is the closed domain $I\times I$ with $m$ marked points $Q_m = \{ q_1, \ldots, q_m\}$. 
For $i,j\in \{1,\dots ,m\}$ satisfying $i<j$, an $(i,j)$-{\it cord} on $(D,{Q}_m)$ is an embedding $\gamma : [0,1]\to ({\rm Int}\, D\setminus {Q}_m)\cup \{q_i,q_j\}$ such that $\gamma(0)=q_i$ and $\gamma(1)=q_j$. Two cords $\gamma_1$ and $\gamma_2$ are {\it isotopic} if they are ambiently isotopic to each other by an isotopy of $D$ that fixes $Q_m$ and $\partial D$ pointwise.

For an element $\beta$ of $SB_m^{+}$ (resp. $SB_m^{-}$), there is a cord $\gamma_{\beta}$ such that $\beta$ corresponds to the isotopy class of a positive (resp. negative) half-twist about $\gamma_{\beta}$, namely a self-homeomorphism of $(D,Q_m)$ which twists a sufficiently small
disk neighborhood of the simple arc $\gamma_{\beta}([0,1])$ by $180^{\circ}$ counterclockwise (resp. clockwise). See Figure~\ref{fig:half}. 
We call $\gamma_{\beta}$ a {\it cord of} $\beta$. A cord of a simple braid is uniquely determined up to isotopy. Conversely, the isotopy class of an $(i,j)$-cord and a sign $\varepsilon \in \{ \pm 1 \}$ uniqely determines a simple braid in $SB(i\ j)_m^{\varepsilon}$. For example, the cord of the standard generator $\sigma_i$ is the path traversing the segment $\overline{q_iq_{i+1}}$ from $q_i$ to $q_{i+1}$. 
\begin{figure}
\[
\begin{tikzpicture}[baseline=-5pt, x=3mm, y=3mm]
\draw (0,0) circle[radius=4];
\fill (2,0) circle[radius=1.6pt];
\fill (-2,0) circle[radius=1.6pt];
\draw[dotted] (2,0) -- (4,0);
\draw[dotted] (-2,0) -- (-4,0);
\draw (-2,0) -- (2,0);
\draw (0,1) node{$\gamma_{\beta}$};
\end{tikzpicture}
\quad \longrightarrow \quad
\begin{tikzpicture}[baseline=-5pt, x=3mm, y=3mm]
\draw (0,0) circle[radius=4];
\fill (2,0) circle[radius=1.6pt];
\fill (-2,0) circle[radius=1.6pt];
\draw[dotted] (2,0) to[bend left=50] (-4,0);
\draw[dotted] (-2,0) to[bend left=50] (4,0);
\draw (-2,0) -- (2,0);
\end{tikzpicture}
\]
\caption{the positive half-twist about the cord $\gamma_{\beta}$}
\label{fig:half}
\end{figure}
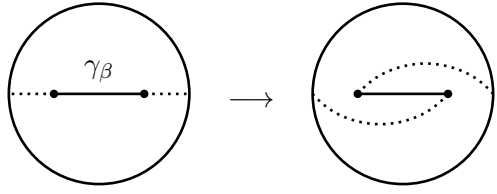

For each $k \in \{ 1,\ldots, m\}$, take a small closed disk $D_k$ centered at $q_k$, and let $l_k$ be the loop which traverses the boundary of $D_k$ clockwise.
Also, let $\alpha_k$ be the path traversing the segment $\overline{q_0q_k}$ from $q_0$ to $q_k$, where $q_0=(1/2, 0)\in {\partial D}$. The first homology group $H_1(D \setminus Q_m)$ is naturally identified with $\pi_1(D \setminus Q_m,q_0)^{\rm abel} = F_m^{\rm abel} = H$ and the homology class $[l_k]$ with $X_k$, for $1 \le k \le m$. 

Take $i, j\in \{1, \dots , m\}$ satisfying $i<j$.
Let
\[
D_{i,j}= D \setminus \bigsqcup_{k \neq i,j} {\rm Int}\, D_k.
\]
The first homology group $H_1(D_{i,j})$, which we may view as a subgroup of $H$, 
is generated by $X_k$ for $k\in \{ 1,\ldots, m\} \setminus \{i,j\}$.
For an $(i,j)$-cord $\gamma$, let $\widetilde{\gamma}$ be a loop with base point $q_0$ given by $\widetilde{\gamma}=\alpha_i\cdot \gamma \cdot \alpha_j^{-1}$. We call its first homology class
$[\widetilde{\gamma}]\in H_1(D_{i,j})$ the {\it first homology class of} $\gamma$. 

Let $\displaystyle \mathcal{H}_{i,j} = H_1(D_{i,j})=\bigoplus_{k\neq i,j} {\mathbb Z}X_k$ and let $\displaystyle \widetilde{\mathcal{H}}=\bigsqcup_{i<j} \mathcal{H}_{i,j}$. 
Let $v: SB(i\ j)_m\to \mathcal{H}_{i,j}\times \{\pm 1 \}$ be a map defined by 
$v(\beta)=([\widetilde{\gamma_{\beta}}],\varepsilon (\beta))$ for $\beta\in SB(i\ j)_m$, where $\varepsilon (\beta)=+1$ if $\beta\in SB(i\ j)_m^{+}$ or $\varepsilon (\beta)=-1$ if $\beta\in SB(i\ j)_m^{-}$. This map can be extended to a map 
\[
v:SB_m\to \widetilde{\mathcal{H}} \times \{\pm 1\}.
\]

\begin{figure}
\[
\begin{tikzpicture}[x=4mm, y=2mm]
\draw (0,14.2) node{$1$};
\draw (2,14.2) node{$2$};
\draw (4,14.2) node{$3$};
\draw (6,14.2) node{$4$};
\draw (8,14.2) node{$5$};
\draw (2,0) -- (2,6);
\draw (3,0) -- (7,6);
\draw (3,6) -- (4.8, 3.3);
\draw (5.2, 2.7) -- (7,0);
\draw (4,0) -- (4,1.1);
\draw (4,1.9) -- (4,4.1);
\draw (4,4.9) -- (4,6);
\draw (8,0) -- (8,6);
\draw (2,6) -- (2,8.7);
\draw (2,9.3) -- (2,13);
\draw (1.8,7.5) to[bend left=50] (1.8,8.7) to (2.2,9.3) to[bend right=50] (2.2,10.5);
\draw (1.8,10.6) to[bend left=20] (0,13);
\draw (4,6) -- (4,13);
\draw (3,6) -- (2.2,7.2);
\draw (7,6) -- (8.2,7.8) to[bend right=70] (8.2,9);
\draw (7.8, 9.1) to[bend left=20] (6,13);
\draw (8,6) -- (8,7.1);
\draw (8,7.8) -- (8,13);
\draw (2,0) -- (2,-2.7);
\draw (2,-3.3) -- (2,-7);
\draw (1.8,-1.5) to[bend right=50] (1.8,-2.7) to (2.2,-3.3) to[bend left=50] (2.2,-4.5);
\draw (1.8,-4.6) to[bend right=20] (0,-7);
\draw (4,0) -- (4,-7);
\draw (3,0) -- (2.2,-1.2);
\draw (7,0) -- (8.2,-1.8) to[bend left=70] (8.2,-3);
\draw (7.8, -3.1) to[bend right=20] (6,-7);
\draw (8,0) -- (8,-1.1);
\draw (8,-1.8) -- (8,-7); 
\end{tikzpicture}
\hspace{4em}
\begin{tikzpicture}[x=4mm, y=4mm]
\draw (-2,-4) -- (10,-4) -- (10,4) -- (-2,4) -- (-2,-4);  
\fill (2,0) circle[radius=1.6pt];
\fill (4,0) circle[radius=1.6pt];
\fill (8,0) circle[radius=1.6pt];
\fill (0,0) circle[radius=1.6pt];
\fill (6,0) circle[radius=1.6pt]; 
\draw (2,0) circle[radius=0.4];
\draw[->] (2.4,0) --(2.4,-0.2);
\draw (2,-1) node{\small $l_2$};
\draw (4,0) circle[radius=0.4];
\draw[->] (4.4,0) --(4.4,-0.2);
\draw (4,-1) node{\small $l_3$};
\draw (8,0) circle[radius=0.4];
\draw[->] (8.4,0) --(8.4,-0.2);
\draw (8,-1) node{\small $l_5$};
\draw (2,2.5) node{\small $\gamma_{\beta}$};
\draw(0,0) -- (0,0.5) to[bend left=30] (0.5,1) -- (2,1) to[bend left=30] (2.7,0.3) -- (2.7,-1.3) to[bend left=30] (2,-2) -- (0,-2) to[bend left=40] (-0.7,-1.3) -- (-0.7,1.3) to[bend left=40] (0,2) -- (0.5,2);
\draw[->] (0.5,2) --(1.5,2);
\draw (1.5,2) -- (2,2) to[bend left=50] (3.3,0.7) -- (3.3,-0.7) to[bend right=40] (4,-2) -- (8,-2) to[bend right=30] (9,-1) -- (9,0.5) to[bend right=30] (8.5,1) -- (6.5,1) to[bend right=30] (6,0.5) -- (6,0);
\fill (4,-4) circle[radius=1.6pt];
\draw (4,-4.7) node{\small $q_0$}; 
\draw (0,-0.7) node{\small $q_1$};
\draw (6.5,-0.7) node{\small $q_4$};
\draw[->] (4,-4) -- (3,-3);
\draw (3,-3) -- (0,0); 
\draw (2.25,-3) node{\small $\alpha_1$};
\draw[->] (4,-4) -- (4.5,-3);
\draw (4.5,-3) -- (6,0);
\draw (5.3,-3) node{\small $\alpha_4$};
\end{tikzpicture}
\]
\caption{the $5$-braid $\beta=\sigma_1 * (\sigma_2 \sigma_3^{-1} \sigma_4^{-2} \sigma_1^{-2})$ and its cord $\gamma_{\beta}$}
\label{fig:gamma_b}
\end{figure}

For example, let $\beta=\sigma_1*(\sigma_2\sigma_3^{-1}\sigma_4^{-2}\sigma_1^{-2})\in SB_5^{+}$ as in the left hand side of Figure~\ref{fig:gamma_b}. Then, a cord $\gamma_{\beta}$ of $\beta$ is shown on the right-hand side of the figure. 
We see that the first homology class $[\widetilde{\gamma_{\beta}}]$ is $2X_2-X_5$. Hence, $v(b)=(2X_2-X_5, +1) \in \mathcal{H}_{1,4}\times \{\pm 1\}$.

\begin{prop} \label{prop:v_surj}
For each $i,j\in \{1,\dots ,m\}$ with $i<j$ and for each $\varepsilon\in \{\pm 1\}$, we have $v(SB(i\ j)_m^{\varepsilon})=\mathcal{H}_{i,j}\times \{\varepsilon\}$. Therefore, the map $v: SB_m\to \widetilde{\mathcal{H}}\times \{\pm 1\}$ is surjective.
\end{prop}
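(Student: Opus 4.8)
The plan is to prove surjectivity of $v$ by exhibiting, for each target value $(h, \varepsilon) \in \mathcal{H}_{i,j} \times \{\varepsilon\}$, an explicit $(i,j)$-cord $\gamma$ whose first homology class $[\widetilde{\gamma}]$ equals the prescribed element $h = \sum_{k \neq i,j} n_k X_k$. Since a choice of sign $\varepsilon$ and an isotopy class of cord together determine a unique simple braid, and since the two sign strata $SB(i\ j)_m^{+}$ and $SB(i\ j)_m^{-}$ use \emph{the same} set of cords, it suffices to handle the homological part: I must realize every element of $\mathcal{H}_{i,j} = \bigoplus_{k \neq i,j} \mathbb{Z}X_k$ as $[\widetilde{\gamma}]$ for some embedded arc $\gamma$ from $q_i$ to $q_j$.

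First I would recall that $[\widetilde{\gamma}] = [\alpha_i \cdot \gamma \cdot \alpha_j^{-1}] \in H_1(D_{i,j})$, and that under the identification $H_1(D_{i,j}) \cong \bigoplus_{k \neq i,j}\mathbb{Z}X_k$ the coefficient $n_k$ is the algebraic intersection number, equivalently the winding number, of the loop $\widetilde{\gamma}$ around the marked point $q_k$ (the homology class $[l_k]$ dual to $X_k$). Thus the construction reduces to building an embedded arc that winds a prescribed algebraic number of times around each $q_k$ for $k \neq i,j$. The natural strategy is to start from the straight segment $\overline{q_i q_j}$, whose loop $\widetilde{\gamma}$ bounds and hence represents $0 \in \mathcal{H}_{i,j}$, and then to modify it by attaching small ``fingers'': for each $k$ and each unit of $n_k$, push a thin tongue of the arc around $q_k$ in the clockwise or counterclockwise sense (according to the sign of $n_k$), which changes $[\widetilde{\gamma}]$ by $\pm X_k$ while leaving the other coefficients unchanged. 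Performing these finger moves disjointly produces an embedded arc realizing $\sum_k n_k X_k$; this mirrors exactly the example worked out in Figure~\ref{fig:gamma_b}, where the arc winds twice (counterclockwise) around $q_2$ and once (clockwise) around $q_5$ to realize $2X_2 - X_5$.

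The main obstacle is \emph{embeddedness}: a cord must be an embedding $[0,1] \to ({\rm Int}\,D \setminus Q_m)\cup\{q_i,q_j\}$, so the successive finger moves must be arranged to avoid creating self-intersections and to avoid running into the marked points other than the one being encircled. I would address this by choosing mutually disjoint thin neighborhoods for the fingers and routing each tongue along a small embedded loop that returns along a nearby parallel strand, so that the total arc remains simple; because each elementary finger move is a local, compactly supported isotopy of the ambient surface away from the other markings, the moves can be composed without interference. Once embeddedness is secured and the homology count verified, the identity $v(SB(i\ j)_m^{\varepsilon}) = \mathcal{H}_{i,j}\times\{\varepsilon\}$ follows for each fixed $i<j$ and $\varepsilon$, and taking the union over all $i<j$ and both signs gives surjectivity of $v: SB_m \to \widetilde{\mathcal{H}}\times\{\pm 1\}$.
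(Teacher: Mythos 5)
Your reduction to the homological statement (realizing every class of $\mathcal{H}_{i,j}$ by an embedded $(i,j)$-cord) is the right one, and a single thin finger move does change $[\widetilde{\gamma}]$ by $\pm X_k$. But the construction breaks down exactly where the proposition has content, namely for coefficients $|n_k|\ge 2$: two \emph{disjoint} thin tongues cannot both wind around the same point $q_k$. Indeed, each tongue together with the sub-arc of the base arc that it replaces is a Jordan curve, and your tongue-loops are pairwise disjoint. Two disjoint Jordan curves whose interiors both contain $q_k$ must be nested; the inner loop contains a sub-arc of the base arc, so the base arc enters the interior of the outer loop; since the base arc meets the outer loop only along its attachment sub-arc, the component of its complement that lies inside must terminate at $q_i$ or $q_j$, forcing $q_i$ or $q_j$ into the interior of the outer loop. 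A ``local, compactly supported'' tongue routed around $q_k$ ``away from the other markings'' encircles neither $q_i$ nor $q_j$, so this is a contradiction: disjoint local finger moves realize only the classes with all $n_k\in\{-1,0,1\}$, not all of $\mathcal{H}_{i,j}$. You also misread Figure~\ref{fig:gamma_b}: the winding number $2$ around $q_2$ there is produced by two \emph{nested} loops that also encircle the endpoint $q_1$, not by two disjoint small tongues around $q_2$.

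The missing idea --- which is the crux both of that figure and of the paper's proof --- is that $q_i$ and $q_j$ are \emph{not} punctures of $D_{i,j}$, so loops encircling them are invisible in $\mathcal{H}_{i,j}=H_1(D_{i,j})$, and high winding around $q_k$ is achieved by spiraling around $q_i$ (or $q_j$) together with $q_k$: nested ``fat'' tongues, each encircling both $q_i$ and $q_k$, stay embedded because the initial portion of the arc, terminating at $q_i$, is allowed to sit inside all of them. Your proposal could be repaired along these lines, but the nesting/ordering of the fat tongues then has to be organized, which is no longer a purely local argument. The paper argues in the opposite direction: it starts from an \emph{immersed} loop representing the given class (in a normalized position through $q_i$ and $q_j$) and removes self-intersections one at a time by an ambient move that pushes the last double point out ``through $q_j$'' onto the segment $\overline{q_0q_j}$ (Figure~\ref{fig:pushingout}); this move is a homotopy inside $D_{i,j}$ --- crossing $q_j$ is legal there --- so it preserves the class, and it terminates in an embedded cord. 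Finally, a minor but real slip: your starting arc, the straight segment $\overline{q_iq_j}$, is not a cord when $j>i+1$, since all marked points are collinear on the line $y=1/2$; it must be perturbed off the intermediate $q_k$, and only the perturbation passing on the $q_0$ side of those points represents $0\in\mathcal{H}_{i,j}$.
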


\begin{proof}
We will show that any element in $\mathcal{H}_{i,j}$ is the homology class of some $(i,j)$-cord. 
Take an element $X \in \mathcal{H}_{i,j}$.
Let $l: [0,1] \to D_{i,j}$ be a loop based at $q_0$ whose homology class is $X$.
By a suitable homotopy in $(D_{i,j},q_0)$, we may assume that the following conditions are satisfied:
\begin{itemize}
    \item $l$ is an immersion with transverse double points only;
    \item $l(\frac{1}{3}) = q_i$, $l(\frac{2}{3}) = q_j$, and the curves $l([0,\frac{1}{3}])$ and $l([\frac{2}{3},1])$ are segments $\overline{q_0q_i}$ and $\overline{q_0q_j}$, respectively.
\end{itemize}
Let $\Sigma$ be the set of self-intersections of the path $l([\frac{1}{3},\frac{2}{3}])$.
If $\Sigma = \emptyset$, we are done: a reparametrization $\gamma$ of $l|_{[\frac{1}{3},\frac{2}{3}]}$ is an $(i,j)$-cord with $l = \alpha_i \cdot \gamma \cdot \alpha_j^{-1}$, and hence the homology class of $\gamma$ is $[l] = X$. 

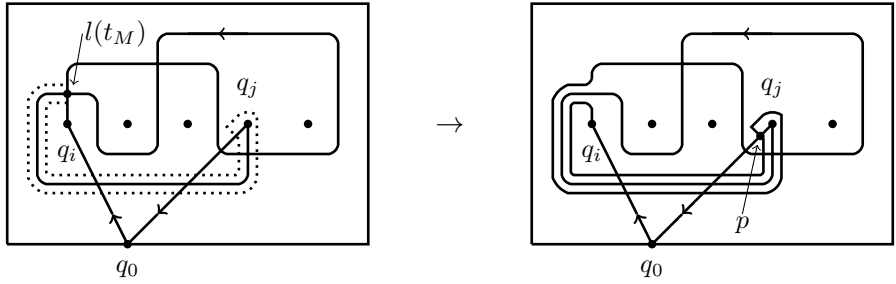
\begin{figure}
\[
\begin{tikzpicture}[baseline=-3pt, x=4mm, y=4mm]
\draw (-5,-4) -- (7,-4) -- (7,4) -- (-5,4) -- (-5,-4); 
\fill (-3,0) circle[radius=1.6pt];
\fill (-1,0) circle[radius=1.6pt];
\fill (1,0) circle[radius=1.6pt];
\fill (3,0) circle[radius=1.6pt];
\fill (5,0) circle[radius=1.6pt];
\fill (-3,1) circle[radius=1.6pt];
\draw[->, thin] (-2.5,3) node[right=-3pt]{\small $l(t_M)$} -- (-2.8,1.3);
\draw (3,0) -- (3,-1.7) to[bend left=30] (2.7,-2) -- (-3.7,-2) to[bend left=30] (-4,-1.7) -- (-4,0.7) to[bend left=30] (-3.7,1) -- (-2.3,1) to[bend left=30] (-2,0.7) -- (-2,-0.7) to[bend right=30] (-1.7,-1) -- (-0.3,-1) to[bend right=30] (0,-0.7) -- (0,2.7) to[bend left=30] (0.3,3) -- (5.7,3) to[bend left=30] (6,2.7) -- (6,-0.7) to[bend left=30] (5.7,-1) -- (2.3,-1) to[bend left=30] (2,-0.7) -- (2,1.7) to[bend right=30] (1.7,2) -- (-2.7,2) to[bend right=30] (-3,1.7) -- (-3,0);
\draw[->-] (3,3) -- (1,3);
\draw[dotted] (-3,0.7) -- (-3.6,0.7) to[bend right=20] (-3.7,0.6) -- (-3.7,-1.6) to[bend right=20] (-3.6,-1.7) -- (2.6,-1.7) to[bend right=20] (2.7,-1.6) -- (2.7,-0.5) to (2.3,-0.1) to (2.7,0.3) to[bend left=30] (3.3,0.3) to (3.3,-1.8) to[bend left=30] (2.8,-2.3) to (0.8,-2.3) -- (-3.8,-2.3) to[bend left=20] (-4.3,-1.8) -- (-4.3,0.8) to[bend left=20] (-3.8,1.3) -- (-3,1.3);
\fill (-1,-4) circle[radius=1.6pt] node[below=2pt]{\small $q_0$};
\draw (-3,-1) node{\small $q_i$};
\draw (3,1.2) node{\small $q_j$};
\draw[->] (-1,-4) -- (-1.5,-3);
\draw (-1.5,-3) -- (-3,0);
\draw (-1,-4) -- (0,-3); 
\draw[->] (3,0) -- (0,-3);
\end{tikzpicture}
\hspace{2em} \rightarrow \hspace{2em}
\begin{tikzpicture}[baseline=-3pt, x=4mm, y=4mm]
\draw (-5,-4) -- (7,-4) -- (7,4) -- (-5,4) -- (-5,-4); 
\fill (-3,0) circle[radius=1.6pt];
\fill (-1,0) circle[radius=1.6pt];
\fill (1,0) circle[radius=1.6pt];
\fill (3,0) circle[radius=1.6pt];
\fill (5,0) circle[radius=1.6pt];
\draw[->, thin] (2,-3) node[below=-3pt]{\small $p$} -- (2.5,-0.7);
\fill (2.6,-0.4) circle[radius=1.6pt];
\draw (3,0) -- (3,-1.7) to[bend left=30] (2.7,-2) -- (-3.7,-2) to[bend left=30] (-4,-1.7) -- (-4,0.7) to[bend left=30] (-3.7,1) -- (-2.3,1) to[bend left=30] (-2,0.7) -- (-2,-0.7) to[bend right=30] (-1.7,-1) -- (-0.3,-1) to[bend right=30] (0,-0.7) -- (0,2.7) to[bend left=30] (0.3,3) -- (5.7,3) to[bend left=30] (6,2.7) -- (6,-0.7) to[bend left=30] (5.7,-1) -- (2.3,-1) to[bend left=30] (2,-0.7) -- (2,1.7) to[bend right=30] (1.7,2) -- (-2.7,2) to[bend right=30] (-3,1.7) -- (-3,1.5);
\draw (-3,0) -- (-3,0.5) to[bend right=20] (-3.2,0.7);
\draw[->-] (3,3) -- (1,3);
\draw (-3.2,0.7) -- (-3.6,0.7) to[bend right=20] (-3.7,0.6) -- (-3.7,-1.6) to[bend right=20] (-3.6,-1.7) -- (2.6,-1.7) to[bend right=20] (2.7,-1.6) -- (2.7,-0.5) to (2.3,-0.1) to (2.7,0.3) to[bend left=30] (3.3,0.3) to (3.3,-1.8) to[bend left=30] (2.8,-2.3) to (0.8,-2.3) -- (-3.8,-2.3) to[bend left=20] (-4.3,-1.8) -- (-4.3,0.8) to[bend left=20] (-3.8,1.3) -- (-3.2,1.3) to[bend right=20] (-3,1.5);
\fill (-1,-4) circle[radius=1.6pt] node[below=2pt]{\small $q_0$};
\draw (-3,-1) node{\small $q_i$};
\draw (3,1.2) node{\small $q_j$};
\draw[->] (-1,-4) -- (-1.5,-3);
\draw (-1.5,-3) -- (-3,0);
\draw (-1,-4) -- (0,-3); 
\draw[->] (3,0) -- (0,-3);
\end{tikzpicture}
\]
\caption{pushing out $l(t_M)$ through $q_j$}
\label{fig:pushingout}
\end{figure}

When $\Sigma \neq \emptyset$, we modify $l$ as follows. Let $t_M$ be the maximal number in the preimage $l^{-1}(\Sigma)$.
There exists an ambient isotopy of $D_{i,j}$ fixing $q_0$ and $Q_m\setminus \{q_i, q_j\}$ pointwise which {\it pushes out} $l(t_M)$ through $q_j$, i.e. which slides the intersection $l(t_M)$ through the curve $l([t_M, \frac{2}{3}])$ towards $q_j$, passing through $q_j$ and reaching onto the interior of $\overline{q_0q_j}$. 
See Figure~\ref{fig:pushingout}. Here, $p$ stands for the result of pushing out $l(t_M)$ by the ambient isotopy. (See also \cite{kamada-matsumoto}.)
This isotopy reduces the number of points in $\Sigma$.
By repeating this process, we may assume that $\Sigma = \emptyset$. This completes the proof.
\end{proof}

For $k\in \{1,\dots, m\}\setminus \{i,j\}$, let $\zeta_k$ (resp. $\eta_k$) be the intersection of the vertical segment $\{k/(m+1)\} \times [1/2,1]$ (resp. $\{ k/(m+1) \} \times [0,1/2]$) with $D_{i,j}$, oriented upwards.  
We use the same letters $\zeta_k$ and $\eta_k$ for their homology classes in $H_1(D_{i,j}, \partial D_{i,j})$. 
Note that we have
\[
\zeta_k + \eta_k = 0 \in H_1(D_{i,j},\partial D_{i,j}).
\]
Using the intersection pairing 
$
(\ \cdot \ ) : H_1(D_{i,j}) \times H_1(D_{i,j}, \partial D_{i,j}) 
\to \mathbb{Z}, 
$
we can write 
\[
[\widetilde{\gamma_{\beta}}] = \sum_{k \neq i,j} ([\widetilde{\gamma_{\beta}}] \cdot \zeta_k) X_k.
\]

Since the elements $f_i(\beta)$ recover the crossing matrix $C(\beta)$, the following theorem gives a formula for the crossing matrix of a simple braid. 

\begin{thm} \label{prop:f_i_for_simple}
Let $\beta \in SB(i\ j)_m$.
\begin{enumerate}
\item[$(1)$]
If $\beta$ is positive, then $f_i(\beta) = [\widetilde{\gamma_{\beta}}] + X_j$ and $f_j(\beta) = - [\widetilde{\gamma_{\beta}}]$.

\item[$(2)$]
If $\beta$ is negative, then 
$f_i(\beta) = [\widetilde{\gamma_{\beta}}]$ and $f_j(\beta) = - [\widetilde{\gamma_{\beta}}] - X_i$.

\item[$(3)$]
For $k \neq i,j$, we have
\[
f_k(\beta) = \begin{cases} (([\widetilde{\gamma_{\beta}}]\cdot \zeta_k - 1)) (X_i - X_j) & \text{if $i< k < j$}; \\
([\widetilde{\gamma_{\beta}}]\cdot \zeta_k) (X_i - X_j) & \text{if $k<i$ or $j<k$}.
\end{cases}
\]
\end{enumerate}
\end{thm}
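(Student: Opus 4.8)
The plan is to handle the positive and negative cases together by first reducing the negative case to the positive one, and then to prove the positive case by a direct signed count of crossings in a standard diagram of the half-twist, reading the answer off as an intersection number.

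First I would dispose of the negative case. A negative simple braid $\beta \in SB(i\ j)_m^{-}$ is precisely the inverse of the positive simple braid $\beta^{-1} \in SB(i\ j)_m^{+}$ having the \emph{same} cord, so $[\widetilde{\gamma_{\beta^{-1}}}] = [\widetilde{\gamma_{\beta}}]$. Since $|\beta| = (i\ j)$ is an involution, the homomorphism $\widetilde{C}$ of Lemma~\ref{lem:C_crossed} gives $C(\beta) = -\,(i\ j)\big(C(\beta^{-1})\big)$; applying $f_a$ and invoking the $S_m$-equivariance of Lemma~\ref{lem:f_i_pi} yields $f_a(\beta) = -\,(i\ j)\big(f_{(i\ j)(a)}(\beta^{-1})\big)$. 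Feeding in parts (1) and (3) for $\beta^{-1}$, and using that $(i\ j)$ fixes $[\widetilde{\gamma_{\beta}}] \in \mathcal{H}_{i,j}$ while interchanging $X_i$ and $X_j$, transports these verbatim into parts (2) and (3) for $\beta$. Thus it suffices to treat the positive case.

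For positive $\beta$ I would fix a generic representative of the half-twist about $\gamma = \gamma_{\beta}$ in which the strands indexed by $k \neq i,j$ are straight vertical segments and only strands $i$ and $j$ move, tracing oppositely oriented push-offs of $\gamma$. Such a diagram has exactly one $i$--$j$ crossing, positive and with strand $j$ over strand $i$; this accounts for the summand $X_j$ of $f_i(\beta)$ and for the absence of any $X_i$-term in $f_j(\beta)$. Every remaining crossing pairs a moving strand with a vertical strand $k$, and occurs each time the push-off meets the line $x = k/(m+1)$; whether $k$ is over or under the moving strand is decided by whether that meeting lies above $q_k$ (on $\zeta_k$) or below it (on $\eta_k$). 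Because the auxiliary paths $\alpha_i,\alpha_j$ avoid the upper segments $\zeta_k$, the signed count of the above-meetings of $\gamma$ equals $[\widetilde{\gamma_{\beta}}]\cdot\zeta_k$, the $X_k$-coefficient of $[\widetilde{\gamma_{\beta}}]$; and since strands $i$ and $j$ run along $\gamma$ in opposite directions, their contributions enter the $X_i$- and $X_j$-coefficients of $f_k(\beta)$ with opposite signs, which is exactly the shape $f_k(\beta) = c_k'(X_i - X_j)$.

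The genuinely delicate step, and the one I expect to be the main obstacle, is the signed bookkeeping, in particular the origin of the $-1$ correction when $i < k < j$. The quantity $[\widetilde{\gamma_{\beta}}]\cdot\zeta_k$ is a \emph{closed-loop} intersection number, whereas the actual strand traverses the \emph{open} arc $\gamma$ from $q_i$ to $q_j$; the discrepancy is a boundary term that is nonzero precisely when $q_k$ separates the two endpoints, i.e.\ when $i<k<j$, and this forced extra passage across $x = k/(m+1)$ shifts $c_k=[\widetilde{\gamma_{\beta}}]\cdot\zeta_k$ to $c_k-1$. Verifying that this correction lands in the $X_i$- and $X_j$-slots with the correct signs, and reconciling the braid crossing-sign convention with the homological intersection-sign convention for $\zeta_k$ versus $\eta_k$, is where the real work lies. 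I would fix all signs once and for all by calibrating against the two smallest cases, namely $\sigma_i$ (where $[\widetilde{\gamma_{\sigma_i}}]=0$) and the band braid $\sigma_{i+1}\sigma_i\sigma_{i+1}^{-1}$ (where a single interior point realizes the $-1$), and then conclude the general formula by additivity over the meetings of $\gamma$ with the vertical lines. Since the resulting expressions involve only intersection numbers, this simultaneously shows that $f_\bullet(\beta)$ depends on the cord through its first homology class alone.
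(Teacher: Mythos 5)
Your reduction of the negative case to the positive one is correct, and it is actually cleaner than what the paper does (the paper disposes of case $(2)$ with ``proved similarly, so we omit it''): from $C(\beta)=-|\beta|\big(C(\beta^{-1})\big)$ and Lemma~\ref{lem:f_i_pi} you get $f_a(\beta)=-(i\ j)\big(f_{(i\ j)(a)}(\beta^{-1})\big)$, and since $(i\ j)$ fixes $\mathcal{H}_{i,j}$ pointwise while swapping $X_i$ and $X_j$, parts $(1)$ and $(3)$ for $\beta^{-1}$ do transport to $(2)$ and $(3)$ for $\beta$. Your treatment of the $X_k$-coefficients for $k\neq i,j$ is also essentially the paper's argument: stationary strands for $k\neq i,j$, crossings read off as meetings with $\zeta_k$ or $\eta_k$ according to whether they occur above or below $q_k$, the identification with $[\widetilde{\gamma_{\beta}}]\cdot\zeta_k$ because $\alpha_i,\alpha_j$ miss $\zeta_k$, and a boundary correction (caused by $\alpha_i,\alpha_j$ meeting $\eta_k$) exactly when $i<k<j$.

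The genuine gap is the sentence ``Such a diagram has exactly one $i$--$j$ crossing, positive and with strand $j$ over strand $i$.'' For a general cord this is false in any model of the kind you describe. Whenever $\gamma_{\beta}$ travels past the vertical line through $q_j$ --- as it must, for instance, for the cord of Figure~\ref{fig:gamma_b}, which wraps around $q_5$ before returning to $q_4$ --- the projections of strands $i$ and $j$ cross several times: in a time-sequenced model, each interior intersection of $\gamma_{\beta}$ with that vertical line produces an extra $i$--$j$ crossing while one strand moves and the other sits at $q_j$; and with simultaneous push-offs, a cord that wraps first around a point to the right of $q_j$ and then around a point to the left of $q_i$ forces at least three crossings. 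What is true is only that the \emph{algebraic} counts are $1$ for ``$j$ over $i$'' and $0$ for ``$i$ over $j$'', and this is exactly where the paper has to work: the extra crossings arising during the two traversals of $\gamma_{\beta}$ correspond to intersections of $\gamma_{\beta}$ with the upper and lower halves $\delta_j^{+}$, $\delta_j^{-}$ of the vertical line through $q_j$, and these contributions cancel because the total algebraic intersection of $\gamma_{\beta}$ with the whole line vanishes (the cord approaches $q_j$ horizontally from the $q_i$ side). Without this cancellation you have not established the summand $X_j$ in $f_i(\beta)$ nor the absence of an $X_i$-term in $f_j(\beta)$, which is the heart of part $(1)$. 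Note also that your fallback of calibrating signs on $\sigma_i$ and $\sigma_{i+1}\sigma_i\sigma_{i+1}^{-1}$ cannot repair this point: those cords meet the relevant vertical lines in no interior points, so these examples have no extra $i$--$j$ crossings and cannot detect the missing cancellation; the issue is not an undetermined universal sign but an unbounded number of extra geometric crossings whose mutual cancellation must be argued.
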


\begin{proof}
For $k\in \{1,\dots, m\}$, $D_k$ has been defined as a small disk centered at $q_k$. We may assume that all these disks have the same radius. 
For simplicity, we denote $\gamma = \gamma_{\beta}$.
By a suitable isotopy, we can arrange that the image of $\gamma$ lies in $D_{i,j}$ and that there exists a real number $\varepsilon > 0$ such that
\begin{itemize}
\item $\gamma(t) = q_i + t \overrightarrow{q_iq_j}$ for $t \in [0,\varepsilon] \cup [1-\varepsilon,1]$, 
\item $\gamma(t) \notin D_i \cup D_j$ for $t \in [\varepsilon, 1-\varepsilon]$, and
\item 
on the interval $[\varepsilon, 1-\varepsilon]$, $\gamma$ is transverse to $\delta_k$ and $\eta_k$ for all $k\in \{ 1,\ldots,m\} \setminus \{i,j\}$, as well as to the vertical segments $\delta^-_l:=\{ l/(m+1)\} \times [0,1/2)$ and $\delta^+_l:=\{l/(m+1)\} \times (1/2,1]$ for $l \in \{ i,j\}$, both oriented upwards. 
\end{itemize}
See Figure~\ref{fig:f_i_simple}.
Using $\gamma$, we take a generic geometric braid representing $\beta$ with the following properties.
First, for any $k \neq i,j$, the $k$-th string is stationary, namely it is given by $t \mapsto (q_k, t) \in D \times I$.
Next, the $i$-th and $j$-th strings are given by the following paths.
Here, we divide the unit interval $I$ into three parts: $I = I_1 \cup I_2 \cup I_3$, where $I_1=[0,(1-\varepsilon)/2]$, $I_2=[(1-\varepsilon)/2,(1+\varepsilon)/2]$ and $I_3=[(1+{\varepsilon})/2, 1]$.

\begin{enumerate}
\item[(i)]
On $I_1$, the $i$-th string approaches a point close to $q_j$ traversing $\gamma$, while the $j$-th string is stationary at $q_j$; more concretely, the $i$-th string is given by $t \mapsto (\gamma(2t),t)$ and the $j$-th string by $t \mapsto (q_j,t)$  for $t\in I_1$.

\item[(ii)]
On $I_2$, the $i$-th and $j$-th strings exchange their positions by a rotation along the circle whose diameter is the line segment between $\gamma(1-\varepsilon)$ and $q_j$, with constant angular velocity $\pi/\varepsilon$ (resp. $-\pi/\varepsilon$) if $\beta$ is positive (resp. negative);
more concretely, if $\beta$ is positive, the $i$-th string is given by $t\mapsto (\omega_i(t), t)$ and the $j$-th string by $t\mapsto (\omega_j(t), t)$ for $t\in I_2$, where $\omega_i(t)$ and $\omega_j(t)$ are the points on the circle above with arguments $\pi (t-(1-\varepsilon)/2)$ and $\pi(t+(1+\varepsilon)/2)$, respectively. 

\item[(iii)]
On $I_3$, the $i$-th string is stationary at $q_j$, while the $j$-th string approaches $q_i$ traversing the inverse path of $\gamma$; more concretely, the $i$-th string is given by $t \mapsto (q_j,t)$ and the $j$-th string by $t \mapsto (\gamma(2-2t),t)$ for $t\in I_3$.
\end{enumerate}

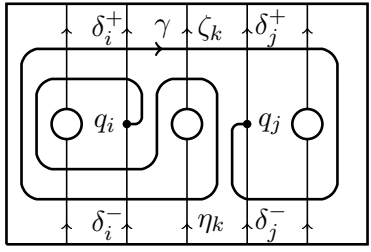
\begin{figure}
\[
\begin{tikzpicture}[x=4mm, y=4mm]
\draw (-4,-4) -- (8,-4) -- (8,4) -- (-4,4) -- (-4,-4);
\fill (0,0) circle[radius=1.6pt] node[left]{$q_i$};
\fill (4,0) circle[radius=1.6pt] node[right]{$q_j$};
\draw (-2,0) circle[radius=0.5];
\draw (2,0) circle[radius=0.5];
\draw (6,0) circle[radius=0.5];
\draw (0,0) -- (0.3,0) to[bend right=30] (0.5,0.2) -- (0.5,1) to[bend right=30] (0,1.5) -- (-2.5,1.5) to[bend right=30] (-3,1) -- (-3,-1) to[bend right=30] (-2.5,-1.5) -- (0.5,-1.5) to[bend right=30] (1,-1) -- (1,1) to[bend left=30] (1.5,1.5) -- (2.5,1.5) to[bend left=30] (3,1) -- (3,-2) to[bend left=30] (2.5,-2.5) -- (-3,-2.5) to[bend left=30] (-3.5,-2) -- (-3.5,2) to[bend left=30] (-3,2.5) -- (6.5,2.5) to[bend left=30] (7,2) -- (7,-2) to[bend left=30] (6.5,-2.5) -- (4,-2.5) to[bend left=30] (3.5,-2) -- (3.5,-0.2) to[bend left=30] (3.7,0) -- (4,0);
\draw[->] (1,2.5) -- (1.2,2.5) node[above]{$\gamma$}; 
\draw[semithick] (-2,0.5) -- (-2,4);
\draw[->, semithick] (-2,3) -- (-2,3.2);
\draw[semithick] (-2,-4) -- (-2,-0.5);
\draw[->, semithick] (-2,-3.5) -- (-2,-3.3);
\draw[semithick] (0,-4) -- (0,4);
\draw[->, semithick] (0,-3.5) -- (0,-3.3) node[left=-3pt]{$\delta^-_i$};
\draw[->, semithick] (0,3) -- (0,3.2) node[left=-3pt]{$\delta^+_i$};
\draw[semithick] (2,0.5) -- (2,4);
\draw[->, semithick] (2,3) -- (2,3.2) node[right]{$\zeta_k$};
\draw[semithick] (2,-4) -- (2,-0.5);
\draw[->, semithick] (2,-3.5) -- (2,-3.3) node[right]{$\eta_k$};
\draw[semithick] (4,-4) -- (4,4);
\draw[->, semithick] (4,-3.5) -- (4,-3.3) node[right=-1pt]{$\delta^-_j$};
\draw[->, semithick] (4,3) -- (4,3.2) node[right=-1pt]{$\delta^+_j$};
\draw[semithick] (6,0.5) -- (6,4);
\draw[->, semithick] (6,3) -- (6,3.2);
\draw[semithick] (6,-4) -- (6,-0.5);
\draw[->, semithick] (6,-3.5) -- (6,-3.3);
\end{tikzpicture}
\]
\caption{the arrangement of the cord $\gamma$}
\label{fig:f_i_simple}
\end{figure}

(1) Suppose that $\beta$ is positive.
First we compute the coefficients of $X_k$ in $f_i(\beta)$ and $f_j(\beta)$ for $k\neq i,j$.
For $f_i(\beta)$, the coefficient of $X_k$ comes from the motion of the $i$-th string on the interval $I_1$.
It counts with sign all intersections of $\gamma$ and $\zeta_k$.
Since the paths $\alpha_i$ and $\alpha_j$ are disjoint from $\zeta_k$, we conclude that the coefficient of $X_k$ in $f_i(\beta)$ is equal to $([\widetilde{\gamma}]\cdot \zeta_k)$. 
As for $f_j(\beta)$, the coefficient of $X_k$ comes from the motion of the $j$-th string on $I_3$, and it is negative to that of $f_i(\beta)$.

Next we compute the coefficient of $X_j$ in $f_i(\beta)$.
We count crossings of the $i$-th string over the $j$-th string on the three parts $I_1$, $I_2$ and $I_3$.
On $I_1$, such crossings correspond to the intersections of $\gamma$ with $\delta^+_j$, each contributing 
an amount equal to the sign of the intersection.
On $I_2$, we obtain a positive crossing.
On $I_3$, such crossings correspond to the intersections of $\gamma$ with $\delta^-_j$, each contributing an amount equal to the negative of the sign of the intersection.
By our arrangement of $\gamma$, the algebraic intersection number of $\gamma$ with the union of $\delta_j^+$ and $\delta_j^-$ is zero.
Hence the contributions from $I_1$ and $I_3$ cancel, and the coefficient of $X_j$ in $f_i(\beta)$ is one.

In a similar manner, we see that the coefficient of $X_i$ in $f_j(\beta)$ is zero.
In this case, the contributions from $I_1$ and $I_3$ cancel as well, and 
we have no contribution from $I_2$.
This completes the proof of (1).

The case (2) is proved similarly, so we omit it.

(3) Similarly to the case (1), we have
\[
f_k(\beta) = - (\gamma \cdot \eta_k) (X_i - X_j),
\]
where $(\gamma \cdot \eta_k)$ is the number of positive intersections of $\gamma$ and $\eta_k$ minus the number of negative intersections of $\gamma$ and $\eta_k$.
Furthermore, since $\widetilde{\gamma} = \alpha_i \cdot \gamma \cdot \alpha_j^{-1}$, we have 
\[
(\gamma \cdot \eta_k) = 
\begin{cases} 
([\widetilde{\gamma}]\cdot \eta_k) + 1 & \text{if $i < k < j$} \\
([\widetilde{\gamma}]\cdot \eta_k) & \text{if $k < i$ or $j < k$.}
\end{cases}
\]
Since $([\widetilde{\gamma}]\cdot \eta_k) = - ([\widetilde{\gamma}]\cdot \zeta_k)$, we obtain the desired formula.
\end{proof}

\begin{cor} \label{cor:homology-Jhonson}
Fix $i,j\in \{ 1,\ldots,m\}$ with $i<j$ and $\varepsilon \in \{\pm 1\}$. For $\beta, \beta'\in SB(i\ j)_m$, we have $[\widetilde{\gamma_{\beta}}] =[\widetilde{\gamma_{\beta'}}]$ if and only if ${\tau}_1^{\theta}(\beta)={\tau}_1^{\theta}(\beta')$.
\end{cor}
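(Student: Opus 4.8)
The plan is to reduce the statement to the explicit formulas of Theorem~\ref{prop:f_i_for_simple} by means of two observations that decouple $\tau_1^{\theta}$ from the geometry. First I would record that $\tau_1^{\theta}(\beta) = \tau_1^{\theta}(\beta')$ if and only if $C(\beta) = C(\beta')$: indeed, by Theorem~\ref{thm:tau=C2} we have $\tau_1^{\theta} = \delta \circ C$, and since $\delta$ is injective, equality of the $\tau$-values is equivalent to equality of the crossing matrices. Second, since $f_k(M) = \sum_{\ell} \alpha_{\ell k} X_\ell$ is precisely the $k$-th column of $M = (\alpha_{\ell k})$, the full collection $\{ f_k(\beta) \}_{k=1}^m$ recovers $C(\beta)$; hence $C(\beta) = C(\beta')$ is equivalent to $f_k(\beta) = f_k(\beta')$ for every $k \in \{1,\ldots,m\}$. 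Combining these, the corollary becomes the assertion that, for $\beta, \beta'$ of the same sign $\varepsilon$ with $|\beta| = |\beta'| = (i\ j)$, the equality $[\widetilde{\gamma_{\beta}}] = [\widetilde{\gamma_{\beta'}}]$ holds if and only if $f_k(\beta) = f_k(\beta')$ for all $k$.

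For the forward implication I would invoke Theorem~\ref{prop:f_i_for_simple} directly. Each of its three cases expresses $f_k(\beta)$ solely in terms of the homology class $[\widetilde{\gamma_{\beta}}]$, the sign $\varepsilon$, and data independent of $\beta$ (the generators $X_i, X_j$ and the intersection coefficients $[\widetilde{\gamma_{\beta}}]\cdot\zeta_k$, each of which is the coefficient of $X_k$ in $[\widetilde{\gamma_{\beta}}]$ and is therefore itself determined by $[\widetilde{\gamma_{\beta}}]$). Consequently, once $[\widetilde{\gamma_{\beta}}] = [\widetilde{\gamma_{\beta'}}]$ and the signs agree, every $f_k(\beta) = f_k(\beta')$, and hence $\tau_1^{\theta}(\beta) = \tau_1^{\theta}(\beta')$.

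For the converse it suffices to isolate a single index $k$ whose value $f_k$ already recovers $[\widetilde{\gamma_{\beta}}]$. If $\varepsilon = +1$, then $f_j(\beta) = -[\widetilde{\gamma_{\beta}}]$ by part~(1), so $f_j(\beta) = f_j(\beta')$ forces $[\widetilde{\gamma_{\beta}}] = [\widetilde{\gamma_{\beta'}}]$; if $\varepsilon = -1$, then $f_i(\beta) = [\widetilde{\gamma_{\beta}}]$ by part~(2), and $f_i(\beta) = f_i(\beta')$ yields the same conclusion. Thus equality of the $\tau$-values, equivalently of all the $f_k$, returns the equality of homology classes.

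There is essentially no hard step here: the content is carried entirely by Theorem~\ref{prop:f_i_for_simple} together with the injectivity of $\delta$. The only point requiring a moment's care is the bookkeeping that $f_k$ is literally the $k$-th column of the crossing matrix, so that the finite family $\{ f_k \}_k$ contains exactly the same information as $C$; once this is noted, both implications are immediate substitutions into the formulas. I would also remark that fixing the sign $\varepsilon$ is necessary, since a positive and a negative simple braid sharing the same cord have different values of $f_i$ and $f_j$ (compare parts~(1) and~(2)), so the hypothesis that $\beta$ and $\beta'$ share their sign cannot be dropped.
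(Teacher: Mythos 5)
Your proof is correct and takes essentially the same route as the paper, whose proof of this corollary is simply the one-line citation of Theorem~\ref{thm:tau=C2} and Theorem~\ref{prop:f_i_for_simple}; you have merely made explicit the intermediate bookkeeping (injectivity of $\delta$, the fact that the $f_k$ are the columns of $C(\beta)$, and recovering $[\widetilde{\gamma_{\beta}}]$ from $f_j$ or $f_i$ according to the sign). Your closing observation that the common-sign hypothesis cannot be dropped is also correct and worth noting, since it clarifies that the fixed $\varepsilon$ in the statement is to be read as restricting $\beta,\beta'$ to $SB(i\ j)_m^{\varepsilon}$.
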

\begin{proof}
This follows from Theorem~\ref{thm:tau=C2} and Theorem~\ref{prop:f_i_for_simple}.
\end{proof}

Recall the group homomorphism $\widetilde{\tau}_1^{\theta}: B_m\to {\rm Hom}(H,\wedge^{2}H)\rtimes S_m$ defined by $\widetilde{\tau}_1^{\theta}(\beta)=(\tau_1^{\theta}(\beta), |\beta|)$. For each $\varepsilon\in \{\pm 1\}$ and each element $(f, \varepsilon)\in  \mathcal{H}_{i,j} \times \{\varepsilon \}$, there exists an element $\beta\in SB_m^{\varepsilon}$ such that $(f, \varepsilon)=v(\beta) = ([\widetilde{\gamma_\beta}],\varepsilon)$ by Proposition~\ref{prop:v_surj}.
By Corollary~\ref{cor:homology-Jhonson}, the value $\widetilde{\tau}_1^{\theta}(\beta)$ does not depend on the choice of $\beta$.
Thus, the map
\[ 
\mu: \mathcal{H}_{i,j} \times \{\varepsilon\}\to \widetilde{\tau}_1^{\theta}(SB(i\ j)_m^{\varepsilon}), \quad 
\mu((f,\varepsilon))=\widetilde{\tau}_1^{\theta}(\beta), 
\]
where $\beta$ is an element of $SB_m^{\varepsilon}$ such that $(f,\varepsilon)=v(\beta)
$, is well-defined.
This map is extended to a map $\mu: \widetilde{\mathcal{H}}\times \{\pm 1\} \to \widetilde{\tau}_1^{\theta}(SB_m)$. 
By construction, the following diagram commutes: 
\[
\xymatrix{
 & SB_m \ar[dl]_-{v} \ar[dr]^-{\widetilde{\tau}_1^{\theta}} & \\
 \widetilde{\mathcal{H}} \times \{ \pm 1 \} \ar[rr]_{\mu} & & \widetilde{\tau}_1^{\theta}(SB_m).
}
\]
By Corollary~\ref{cor:homology-Jhonson}, we have the following proposition.

\begin{prop} \label{prop:mu_is_bij}
The map $\mu: \mathcal{H}_{i,j}\times \{\varepsilon\}\to \widetilde{\tau}_1^{\theta}(SB(i\ j)_m^{\varepsilon})$ is bijective. Therefore, the map $\mu: \widetilde{\mathcal{H}}\times \{\pm 1\} \to \widetilde{\tau}_1^{\theta}(SB_m)$ is bijective.
\end{prop}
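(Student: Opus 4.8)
The plan is to handle the two assertions in turn: first establish that each restriction $\mu\colon \mathcal{H}_{i,j}\times\{\varepsilon\}\to \widetilde{\tau}_1^{\theta}(SB(i\ j)_m^{\varepsilon})$ is a bijection directly from the machinery already in place, and then assemble these blocks into the global statement by a disjointness argument. Fix $i<j$ and $\varepsilon\in\{\pm1\}$. By its very construction, together with the commuting triangle $\widetilde{\tau}_1^{\theta}=\mu\circ v$ and the surjectivity $v(SB(i\ j)_m^{\varepsilon})=\mathcal{H}_{i,j}\times\{\varepsilon\}$ of Proposition~\ref{prop:v_surj}, the map $\mu$ is already well-defined and surjective onto $\widetilde{\tau}_1^{\theta}(SB(i\ j)_m^{\varepsilon})$; so the only real content of the first assertion is injectivity.

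For injectivity I would argue as follows. Suppose $(f,\varepsilon),(f',\varepsilon)\in\mathcal{H}_{i,j}\times\{\varepsilon\}$ satisfy $\mu((f,\varepsilon))=\mu((f',\varepsilon))$. Using the surjectivity of $v$ again, choose $\beta,\beta'\in SB(i\ j)_m^{\varepsilon}$ with $v(\beta)=(f,\varepsilon)$ and $v(\beta')=(f',\varepsilon)$, so that $[\widetilde{\gamma_{\beta}}]=f$ and $[\widetilde{\gamma_{\beta'}}]=f'$. Then $\widetilde{\tau}_1^{\theta}(\beta)=\mu((f,\varepsilon))=\mu((f',\varepsilon))=\widetilde{\tau}_1^{\theta}(\beta')$, and comparing first coordinates gives $\tau_1^{\theta}(\beta)=\tau_1^{\theta}(\beta')$. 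Corollary~\ref{cor:homology-Jhonson} then forces $f=[\widetilde{\gamma_{\beta}}]=[\widetilde{\gamma_{\beta'}}]=f'$, as desired.

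For the second assertion I would use the decompositions $\widetilde{\mathcal{H}}\times\{\pm1\}=\bigsqcup_{i<j,\,\varepsilon}\mathcal{H}_{i,j}\times\{\varepsilon\}$ and $\widetilde{\tau}_1^{\theta}(SB_m)=\bigcup_{i<j,\,\varepsilon}\widetilde{\tau}_1^{\theta}(SB(i\ j)_m^{\varepsilon})$. Surjectivity of the global $\mu$ is immediate, and its injectivity reduces, given the block bijections just proved, to checking that the images $\widetilde{\tau}_1^{\theta}(SB(i\ j)_m^{\varepsilon})$ are pairwise disjoint. When the transpositions differ, the second coordinates $|\beta|=(i\ j)$ already distinguish the images. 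When $(i,j)$ agrees but the signs differ, I would invoke Theorem~\ref{prop:f_i_for_simple} and Theorem~\ref{thm:tau=C2}: for positive $\beta$ one has $f_i(\beta)=[\widetilde{\gamma_{\beta}}]+X_j$, whose $X_j$-coefficient is $1$, while for negative $\beta'$ one has $f_i(\beta')=[\widetilde{\gamma_{\beta'}}]$, whose $X_j$-coefficient is $0$ (both homology classes lying in $\mathcal{H}_{i,j}$ and hence involving no $X_j$). Thus $X_i\wedge f_i(\beta)\neq X_i\wedge f_i(\beta')$, so $\tau_1^{\theta}(\beta)\neq\tau_1^{\theta}(\beta')$ and the two images are disjoint.

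Since all the substantial inputs (Proposition~\ref{prop:v_surj}, Corollary~\ref{cor:homology-Jhonson}, and Theorems~\ref{thm:tau=C2} and~\ref{prop:f_i_for_simple}) are already available, most of the argument is bookkeeping on the commuting diagram. The one genuinely non-formal point, and the step I would be most careful about, is the sign-disjointness of the images for a fixed transposition: here one must exhibit a concrete invariant (the $X_j$-coefficient of $f_i$) separating the positive and negative blocks, since the homology class of the cord alone does not see the sign.
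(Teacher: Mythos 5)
Your proof is correct, and its skeleton is the same as the paper's intended argument: the paper's entire justification is the single citation of Corollary~\ref{cor:homology-Jhonson} (together with the construction of $\mu$ and Proposition~\ref{prop:v_surj}), which is exactly what you use for well-definedness, surjectivity, and injectivity on each block $\mathcal{H}_{i,j}\times\{\varepsilon\}$. What you supply beyond the paper is the disjointness argument needed for the ``Therefore'' clause: for a fixed transposition $(i\ j)$, the images $\widetilde{\tau}_1^{\theta}(SB(i\ j)_m^{+})$ and $\widetilde{\tau}_1^{\theta}(SB(i\ j)_m^{-})$ must be shown disjoint, and this does not follow from Corollary~\ref{cor:homology-Jhonson} alone --- read, as it must be, as a statement about pairs of the \emph{same} sign --- but requires the $X_j$-coefficient computation from Theorem~\ref{prop:f_i_for_simple} and Theorem~\ref{thm:tau=C2} that you carry out. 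Your insistence on this point is well placed: taken literally for mixed-sign pairs, the ``only if'' direction of Corollary~\ref{cor:homology-Jhonson} would be false, precisely by your computation ($X_i\wedge(f_i(\beta)-f_i(\beta'))=X_i\wedge X_j\neq 0$ when $[\widetilde{\gamma_{\beta}}]=[\widetilde{\gamma_{\beta'}}]$ but the signs differ), so the sign-disjointness is a genuine extra step that the paper's one-line proof leaves implicit. In short, your write-up is a faithful but more complete version of the paper's argument.
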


In a future work, we plan to discuss an application of this proposition to the Hurwitz action of braid groups. 

\section*{Acknowledgments}

This research is supported by JSPS KAKENHI Grant Number 19K03508, 23K03121 and 24K00520.

\end{document}